\pgfplotsset{width=9cm,compat=1.12}
    \definecolor{ocre}{RGB}{0,96,128}
    \definecolor{purp}{RGB}{112,0,112}
    \pgfplotsset{
    	colormap={ocrefade}{
    		rgb255=(150,216,255)
    		rgb255=(0,64,96)
    	}
    }
\newtheorem{conjecture}{Conjecture}
\newtheorem{definition}{Definition}
\newtheorem{lemma}{Lemma}
\newtheorem{theorem}{Theorem}
\newtheorem{corollary}{Corollary}
\newtheorem{proposition}{Proposition}
\newtheorem{question}{Question}
\newtheorem{example}{Example}
\newcommand{\tr}{\operatorname{Tr}}
\newcommand{\rank}{\operatorname{rank}}
\newcommand{\defeq}{\stackrel{\smash{\textnormal{\tiny def}}}{=}}
\def\M{\mathcal{M}}
\def\C{\mathbb{C}}
\def\R{\mathbb{R}}
\def\v{\mathbf{v}}
\begin{document}
\title{The Factor Width Rank of a Matrix}

\author{
	Nathaniel Johnston,\textsuperscript{\!\!1} \ Shirin Moein,\textsuperscript{\!\!1,2,3} \ and Sarah Plosker\textsuperscript{3}
}

\maketitle

\begin{abstract}
    A matrix is said to have factor width at most $k$ if it can be written as a sum of positive semidefinite matrices that are non-zero only in a single $k \times k$ principal submatrix. We explore the ``factor-width-$k$ rank'' of a matrix, which is the minimum number of rank-$1$ matrices that can be used in such a factor-width-at-most-$k$ decomposition. We show that the factor width rank of a banded or arrowhead matrix equals its usual rank, but for other matrices they can differ. We also establish several bounds on the factor width rank of a matrix, including a tight connection between factor-width-$k$ rank and the $k$-clique covering number of a graph, and we discuss how the factor width and factor width rank change when taking Hadamard products and Hadamard powers.\\
    
    \noindent \textbf{Keywords:}  factor width, factor width rank, bandwidth, Hadamard product, covering design\\
	
	\noindent \textbf{MSC2010 Classification:}  
  05C50; 
  15A18;  
  15B48    
\end{abstract}

\addtocounter{footnote}{1}
\footnotetext{Department of Mathematics \& Computer Science, Mount Allison University, Sackville, NB, Canada E4L 1E4}
\addtocounter{footnote}{1}
\footnotetext{Department of Mathematics, Vali-e-Asr University of Rafsanjan, P.O. Box 518, Rafsanjan, Iran}
\addtocounter{footnote}{1}
\footnotetext{Department of Mathematics \& Computer Science, Brandon University, Brandon,
    MB, Canada R7A 6A9}

\section{Introduction}

First introduced in \cite{boman2005factor}, factor width is a matrix-theoretic notion used in the study of real (symmetric) positive semidefinite matrices. Given a positive integer $k$, a matrix is said to have factor width (at most) $k$ if it can be written as a sum of outer products of vectors each having at most $k$ non-zero entries.
The factor width of the matrix is the minimum value of $k$ for which such a decomposition is possible, and it establishes a hierarchy of convex cones of real positive semidefinite matrices. The matrices with factor width $1$ are exactly the diagonal matrices with non-negative diagonal entries, and all $n \times n$ positive semidefinite matrices have factor width at most $n$.

The concept of factor width has been used in optimization theory in order to provide a more tractable alternative to the sum of squares optimization of multivariate polynomials \cite{sos}. The concept has also found use in quantum information theory, where it has  been generalized to complex (Hermitian) positive semidefinite matrices in the study of the PPT$^2$ conjecture \cite{ppt} in entanglement theory, and has also been considered in the study of superposition, where a density matrix with factor width at most $k$ is typically called ``$k$-incoherent'' \cite{levi_coherence,RBC18} (this connection between factor width and multilevel quantum coherence was made explicit in \cite{johnston2022absolutely}).

Our goal in this paper is to investigate the ``factor-width-$k$ rank'' of a matrix that has factor width at most $k$, which is the fewest number of vectors (each with at most $k$ non-zero entries) that can be used in its factor-width-$k$ decomposition. This quantity was introduced briefly in \cite{boman2005factor}, but to our knowledge has not previously been explored in any serious capacity.

Similar notions of rank have been introduced and explored for numerous other convex cones of positive semidefinite matrices, however. For example, the \emph{completely positive rank} of a completely positive matrix is the minimal number of entrywise non-negative vectors whose outer products sum to the matrix \cite{BB03}, and there is a similar notion of rank in the case when each vector has all entries belonging to the set $\{0,1\}$ \cite{berman}. In addition, the \emph{separable rank} (or equivalently, \emph{entanglement breaking rank}) arises if we restrict each vector to be an elementary tensor (or, in the language of quantum information theory, a product state) \cite{PPPR18}, and the \emph{mixed-unitary rank} arises if we restrict each vector to have all Schmidt coefficients equal to each other \cite{GL21,GLLL22}. In all of these cases, the extreme points of the convex cone are simply the rank-$1$ elements of that cone, and the notion of rank comes from asking for the fewest number of extreme points needed to express the matrix via a convex combination.

\subsection{Arrangement of the Paper}

We start in Section~\ref{sec:prelims} by going over some background material and straightforward lemmas an observations. More specifically, we recall the precise definitions of factor width and factor width rank in Section~\ref{sec:fac_wid}. We then make some observations about what effect the ground field $\R$ or $\C$ has on factor width and factor width rank in Section~\ref{sec:real_v_complex}.

In Section~\ref{sec:banded_connection}, we investigate the factor width and factor width rank of banded matrices. We show in Theorem~\ref{thm:factor_width_and_bandwidth} that a matrix's factor width is never larger than its bandwidth, which follows from a more general result (Theorem~\ref{thm:chordal_main}), which states that if the graph corresponding to a positive semidefinite matrix $A$ is chordal, then the factor width of $A$ is bounded above by the clique number of the graph. We show in Theorem~\ref{thm:tridiag_fac_two_rank} that the factor-width-$k$ rank of a banded matrix is always equal to its usual rank.

In Section~\ref{sec:factor_width_2_rank}, we explore the factor-width-$k$ rank of matrices in depth in the $k = 2$ case. We show, for example, that the factor-width-2 rank can be easily computed if a matrix has no entries equal to $0$ (Corollary~\ref{cor:fac_wid_rank2_nonzero}), is an arrowhead matrix (Theorem~\ref{thm:arrowhead_facwid2}), or is reasonably ``close'' to being block diagonal (Theorems~\ref{thm:fac_wid_rank2_3x3} and~\ref{thm:block_diag}).

In Section~\ref{sec:factor_width_rank_bounds}, we develop several lower and upper bounds on factor-width-$k$ rank that apply for all values of $k$. In particular, we establish in Section~\ref{sec:covering_design_bounds} a connection between factor-width-$k$ rank and covering numbers; quantities that seem to be difficult to compute, but have numerous known results and values that we can make use of. In Section~\ref{sec:small_mats}, we summarize what our results and bounds tell us about the factor-width-$k$ rank of small matrices. In particular, we note that we can easily compute the factor-width-$k$ rank of matrices of size $3 \times 3$ or less, and we clarify which $4 \times 4$ matrices still have unknown factor-width-$k$ rank.

Finally, in Section~\ref{sec:hadamard_product} we explore what effect the Hadamard (i.e., entrywise) product of matrices has on their factor widths and factor width ranks.

\section{Preliminaries and Definitions}\label{sec:prelims}

We denote the set of all $n\times n$ matrices with entries from either $\R$ or $\C$ by $\M_n$ (if we with to emphasize or clarify which field we are working over, we will use the notation $\M_n(\R)$ or $\M_n(\C)$ as appropriate). We similarly denote the set of positive semidefinite (PSD) matrices by $\M_n^+$, $\M_n^+(\R)$, or $\M_n^+(\C)$, and we assume symmetry of any PSD matrices herein. The transpose of a matrix $A$ is denoted by $A^T$ and its conjugate transpose is denoted by $A^*$. Every vector $\v$ that we consider is assumed to be a column vector, so $\v^T$ and $\v^*$ are row vectors. We denote the $(i,j)$-entry of matrices $A,B \in \M_n$ via the corresponding lowercase letters, as in $a_{i,j}$ and $b_{i,j}$. Alternatively, we sometimes use the notation $[A]_{i,j}$ and $[B]_{i,j}$ to refer to the same quantities, especially when we wish to discuss specific entries of more complicated matrices (e.g., the $(i,j)$-entry of $A^T+B^3$ is $[A^T+B^3]_{i,j}$).

\subsection{Factor Width and Factor Width Rank}\label{sec:fac_wid}

We now recall the definition of the factor width of a matrix from \cite{boman2005factor}:

\begin{definition}\label{defn:factor_width}
    Suppose $\mathbb{F} \in \{\R,\C\}$. The \emph{factor width} of a matrix $A \in \mathcal{M}_n^{+}(\mathbb{F})$ is the smallest positive integer $k$ with the property that $A$ can be written in the form
    \begin{align}\label{eq:fac_wid_decomp}
        A = \sum_{j=1}^r \mathbf{v}_j\mathbf{v}_j^*
    \end{align}
    for some $\mathbf{v}_1$, $\mathbf{v}_2$, $\ldots$, $\mathbf{v}_r \in \mathbb{F}^n$, each with at most $k$ non-zero entries.
\end{definition}

Thanks to the spectral decomposition, Definition~\ref{defn:factor_width} can be rephrased as saying that the factor width of a matrix $A \in \mathcal{M}_n^{+}$ is the smallest positive integer $k$ for which $A$ can be written in the form
\begin{align}\label{eq:fac_width_by_psd}
    A = \sum_j A_j,
\end{align}
where each $A_j \in \mathcal{M}_n^{+}$ equals $0$ outside of a single $k \times k$ principal submatrix (different $k \times k$ principal submatrices can be non-zero for different $A_j$'s). This formulation of factor width lets us compute it (and even find factor width decompositions) numerically via semidefinite programming \cite{NBCPJA16}. However, for some values of $k$ (e.g., if $k$ is near $n/2$) the size of this semidefinite program is exponential in $n$, so it nonetheless seems likely that computation of factor width is difficult in general.

The matrices with factor width~1 are exactly the diagonal positive semidefinite matrices, since in this case the vectors $\{\mathbf{v}_j\}$ in Definition~\ref{defn:factor_width} must be scalar multiples of standard basis vectors. At the other extreme, if $k = n$ then every $n \times n$ positive semidefinite matrix has a decomposition of the form~\eqref{eq:fac_wid_decomp}, so every PSD matrix has factor width at most $n$. As $k$ increases from $1$ to $n$, the sets of matrices with factor width at most $k$ form a nested sequence of closed convex cones that strictly contain each other.

One can further ask the question of how many terms are needed in a factor-width-$k$ decomposition (in the sense of Equation~\eqref{eq:fac_wid_decomp}) of a matrix:

\begin{definition}\label{defn:rank_of_fac_wid_decomp}
    Suppose $\mathbb{F} \in \{\R,\C\}$ and $A \in \mathcal{M}_n^{+}(\mathbb{F})$ has factor width less than or equal to $k$. We say that the \emph{factor-width-$k$ rank} of $A$, denoted by $\mathrm{fran}_k(A)$ (or $\mathrm{fran}_k^{\mathbb{F}}(A)$ if we wish to emphasize the field we are working over), is the least integer $r$ for which there exists a decomposition of the form
    \begin{align}\label{eq:fac_wid_decomp_rank}
        A = \sum_{j=1}^r \mathbf{v}_j\mathbf{v}_j^*
    \end{align}
    for some $\mathbf{v_1}$, $\mathbf{v_2}$, $\ldots$, $\mathbf{v_r} \in \mathbb{F}^n$, each with at most $k$ non-zero entries.
\end{definition}

The following basic lemmas will be useful in our study. The first lemma's proof follows immediately from the definition of factor-width-$k$ rank, so we only prove the second one. 

\begin{lemma}\label{lem:add_FW}
    Let $A, B \in \M_n^{+}$. Then  $\mathrm{fran}_k(A+B)\leq \mathrm{fran}_k(A)+\mathrm{fran}_k(B)$.
\end{lemma}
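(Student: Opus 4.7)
The plan is to build a factor-width-$k$ decomposition of $A+B$ by concatenating optimal decompositions of $A$ and $B$ separately, which shows that the number of rank-$1$ terms needed for the sum cannot exceed the sum of the counts needed individually.

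First I would set $r = \mathrm{fran}_k(A)$ and $s = \mathrm{fran}_k(B)$ and invoke Definition~\ref{defn:rank_of_fac_wid_decomp} to obtain decompositions
\[
    A = \sum_{j=1}^{r} \v_j \v_j^* \quad \text{and} \quad B = \sum_{j=1}^{s} \w_j \w_j^*,
\]
where each $\v_j$ and $\w_j$ has at most $k$ non-zero entries. Adding these two equations yields
\[
    A + B = \sum_{j=1}^{r} \v_j \v_j^* + \sum_{j=1}^{s} \w_j \w_j^*,
\]
which is a decomposition of $A+B$ of the form~\eqref{eq:fac_wid_decomp_rank} using $r+s$ vectors, every one of which still has at most $k$ non-zero entries. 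Since $\mathrm{fran}_k(A+B)$ is defined as the minimum number of such terms over all valid decompositions, we conclude $\mathrm{fran}_k(A+B) \le r + s = \mathrm{fran}_k(A) + \mathrm{fran}_k(B)$.

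There is essentially no obstacle here: the statement is a direct consequence of the definition, together with the trivial observation that the class of vectors with at most $k$ non-zero entries is closed under taking unions of lists. The only mild subtlety worth flagging is that one should be sure $A+B$ itself has factor width at most $k$ (so that $\mathrm{fran}_k(A+B)$ is defined), but this is automatic from the concatenated decomposition exhibited above.
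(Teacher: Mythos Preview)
Your proof is correct and matches the paper's approach exactly: the paper simply remarks that the lemma ``follows immediately from the definition of factor-width-$k$ rank'' and omits the argument, which is precisely the concatenation-of-decompositions reasoning you wrote out.
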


\begin{lemma}\label{lem:DAD}
    Let $D \in \mathcal{M}_n$ be diagonal with non-zero diagonal entries, and let $A \in \mathcal{M}_n^{+}$. Then $A$ has the same factor width and factor-width-$k$ rank as $DAD$.
\end{lemma}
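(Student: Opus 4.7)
The plan is to show both inequalities (for factor width and for factor-width-$k$ rank) directly from the definition, by using the fact that multiplying a vector by an invertible diagonal matrix preserves its support.

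First I would take any factor-width-at-most-$k$ decomposition $A = \sum_{j=1}^{r} \mathbf{v}_j \mathbf{v}_j^{*}$ with each $\mathbf{v}_j$ having at most $k$ non-zero entries, and compute
\begin{align*}
DAD = \sum_{j=1}^{r} (D\mathbf{v}_j)(D\mathbf{v}_j)^{*},
\end{align*}
interpreting $DAD$ as $DAD^{*}$ (they coincide when $D$ is real, which is the natural setting; if one allows complex $D$, the analogous statement is for $DAD^{*}$). The key observation is that because every diagonal entry of $D$ is non-zero, the vector $D\mathbf{v}_j$ has exactly the same support as $\mathbf{v}_j$, and in particular at most $k$ non-zero entries. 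This produces a valid factor-width-$k$ decomposition of $DAD$ with the same number of terms, giving $\mathrm{fran}_k(DAD) \le \mathrm{fran}_k(A)$ and, in particular, showing that the factor width of $DAD$ is at most that of $A$.

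For the reverse inequalities, I would use that $D$ is invertible (its diagonal entries are non-zero), so $A = D^{-1}(DAD)(D^{-1})^{*}$, and apply exactly the same argument with $D^{-1}$ in place of $D$ and $DAD$ in place of $A$. This yields $\mathrm{fran}_k(A) \le \mathrm{fran}_k(DAD)$ and the corresponding factor-width inequality, completing the proof. There is no real obstacle here: the only thing to be careful about is noting that the support of $D\mathbf{v}_j$ coincides with that of $\mathbf{v}_j$, which is the entire content of the lemma.
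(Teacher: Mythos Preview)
Your proof is correct and follows essentially the same approach as the paper: push the diagonal matrix into each term of a factor-width decomposition, observe that multiplication by an invertible diagonal matrix preserves the support of each vector, and then use invertibility of $D$ for the reverse inequalities. The paper likewise writes $DAD^{*}$ in the computation, so your remark about interpreting $DAD$ as $DAD^{*}$ in the complex case matches the paper's treatment.
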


\begin{proof}
    If $A$ has factor width $k$ and $\mathrm{fran}_k(A) = r$ then it can be written in the form
    \[
        A = \sum_{j=1}^r \mathbf{v}_j\mathbf{v}_j^*
    \]
    for some vectors $\mathbf{v_1}$, $\mathbf{v_2}$, $\ldots$, $\mathbf{v_r}$ that each have at most $k$ non-zero entries. Since the vectors $D\mathbf{v_1}$, $D\mathbf{v_2}$, $\ldots$, $D\mathbf{v_r}$ also have at most $k$ non-zero entries, the decomposition
    \[
        DAD^* = \sum_{j=1}^r (D\mathbf{v}_j)(D\mathbf{v}_j)^*
    \]
    shows that $DAD^*$ has factor width at most $k$ and $\mathrm{fran}_k(DAD) \leq r$. Using the invertibility of $D$ establishes the opposite inequalities and thus equality for both factor width and factor-width-$k$ rank.
\end{proof}

Lemma~\ref{lem:DAD} is useful, for example, because it lets us assume without loss of generality that a matrix has diagonal entries equal to $1$ (since we can conjugate by some invertible diagonal $D$ to make this happen, without altering factor width or factor-width-$k$ rank).\footnote{\label{foot:zero_in_diag}Such a diagonal scaling does not exist if one of the diagonal entries of $A \in \M_n^{+}$ equals $0$, but in this case that entire row and column of $A$ must equal $0$, so we can just ignore that row and column and ask about the factor width and factor-width-$k$ rank of the resulting $(n-1) \times (n-1)$ principal submatrix.} This technique was called \emph{diagonal normalization} in \cite{boman2005factor}.

\subsection{Real Versus Complex Factor Width and Factor Width Rank}\label{sec:real_v_complex}

It is natural to ask whether the properties we are investigating depend on the choice of field. In particular, if a matrix $A \in \M_n$ is real, does its factor width or factor width rank depend on whether the ground field is $\mathbb{R}$ or $\mathbb{C}$? In other words, can complex factor width decompositions~\eqref{eq:fac_wid_decomp_rank} of real matrices make use of fewer non-zero entries or fewer terms than real factor width decompositions?

We start by showing that factor width itself does not depend on the ground field, so there is no ambiguity in just talking about ``the'' factor width of a matrix:

\begin{proposition}\label{prop:factor_width_RvsC}
     Suppose $A \in \M_n^+$ has all entries real. The factor width of $A$ does not depend on whether we consider it as a member of $\M_n^{+}(\R)$ or $\M_n^{+}(\C)$.
\end{proposition}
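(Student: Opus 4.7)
The plan is to show the two inequalities $\mathrm{fw}_{\mathbb{R}}(A) \leq \mathrm{fw}_{\mathbb{C}}(A)$ and $\mathrm{fw}_{\mathbb{C}}(A) \leq \mathrm{fw}_{\mathbb{R}}(A)$ separately. The second one is immediate: any real decomposition of $A$ as in Equation~\eqref{eq:fac_wid_decomp} is simultaneously a complex decomposition with the same number of non-zero entries per vector, so the complex factor width cannot exceed the real factor width.

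For the harder direction, I would start with a complex factor-width-$k$ decomposition $A = \sum_{j=1}^r \mathbf{v}_j\mathbf{v}_j^*$ where each $\mathbf{v}_j \in \mathbb{C}^n$ has at most $k$ non-zero entries, and split each vector into its real and imaginary parts by writing $\mathbf{v}_j = \mathbf{x}_j + i\mathbf{y}_j$ with $\mathbf{x}_j, \mathbf{y}_j \in \mathbb{R}^n$. The key observation is that the supports of $\mathbf{x}_j$ and $\mathbf{y}_j$ are both contained in the support of $\mathbf{v}_j$, so each still has at most $k$ non-zero entries. A direct expansion then gives $\mathbf{v}_j\mathbf{v}_j^* = \mathbf{x}_j\mathbf{x}_j^T + \mathbf{y}_j\mathbf{y}_j^T + i(\mathbf{y}_j\mathbf{x}_j^T - \mathbf{x}_j\mathbf{y}_j^T)$.

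Since $A$ is real, it equals its own real part, so taking real parts on both sides of the complex decomposition yields
\[
    A \;=\; \sum_{j=1}^r \bigl(\mathbf{x}_j\mathbf{x}_j^T + \mathbf{y}_j\mathbf{y}_j^T\bigr),
\]
which is a valid real decomposition of $A$ in the form of Equation~\eqref{eq:fac_wid_decomp} using $2r$ real vectors, each of which has at most $k$ non-zero entries. This establishes $\mathrm{fw}_{\mathbb{R}}(A) \leq k$ and, combined with the opposite inequality, gives equality.

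There is no real obstacle here; the only subtlety to check is that the support condition is preserved when extracting real and imaginary parts (which is transparent entry-by-entry), and that the anti-symmetric imaginary cross-term $i(\mathbf{y}_j\mathbf{x}_j^T - \mathbf{x}_j\mathbf{y}_j^T)$ indeed disappears in aggregate because the left-hand side is known to be real. I would also note in passing that this argument gives $\mathrm{fran}_k^{\mathbb{R}}(A) \leq 2\,\mathrm{fran}_k^{\mathbb{C}}(A)$ but not equality, so the analogous statement for factor width rank requires a separate argument and is not claimed by this proposition.
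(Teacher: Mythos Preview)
Your proof is correct and follows essentially the same approach as the paper: split each complex vector into real and imaginary parts, observe that the imaginary cross-terms cancel because $A$ is real, and obtain a real factor-width-$k$ decomposition with $2r$ terms. Your additional remark that this yields $\mathrm{fran}_k^{\mathbb{R}}(A) \leq 2\,\mathrm{fran}_k^{\mathbb{C}}(A)$ but not equality is exactly the content of the paper's Corollary~\ref{cor:frank_inequality_RvC}.
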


\begin{proof}
    It is clear that the complex factor width of $A$ is less than or equal to its real factor width, so we just need to prove the opposite inequality.

    To this end, suppose $A$ has complex factor width $k$, so we can write
    \begin{align}\label{eq:complex_fack_decomp}
        A = \sum_{j=1}^r \mathbf{v}_j\mathbf{v}_j^*,
    \end{align}
    where each $\mathbf{v}_j \in \C^n$ has at most $k$ non-zero entries. If we let $\mathbf{x}_j := \mathrm{Re}(\mathbf{v}_j)$ and $\mathbf{y}_j := \mathrm{Im}(\mathbf{v}_j)$ be the (entrywise) real and imaginary parts of $\mathbf{v}_j$ then we see that
    \begin{align*}
        A & = \sum_{j=1}^r \mathbf{v}_j\mathbf{v}_j^* = \sum_{j=1}^r (\mathbf{x}_j + i\mathbf{y}_j)(\mathbf{x}_j + i\mathbf{y}_j)^* = \sum_{j=1}^r \mathbf{x}_j\mathbf{x}_j^* + \sum_{j=1}^r \mathbf{y}_j\mathbf{y}_j^* + i\sum_{j=1}^r \big(\mathbf{y}_j\mathbf{x}_j^* - \mathbf{x}_j\mathbf{y}_j^*\big).
    \end{align*}
    Since the entries of $A$ are all real, as are the entries of each $\mathbf{x}_j$ and $\mathbf{y}_j$, the final (imaginary) sum above must equal $0$. This implies
    \begin{align}\label{eq:real_fack_decomp}
        A & = \sum_{j=1}^r \mathbf{x}_j\mathbf{x}_j^* + \sum_{j=1}^r \mathbf{y}_j\mathbf{y}_j^*,
    \end{align}
    which is a decomposition that shows that $A$ has real factor width at most $k$, completing the proof.
\end{proof}

The above proposition demonstrates that we do not need to specify whether the ground field is real or complex when discussing factor width of a real matrix. However, it does not provide us with the same guarantee for factor width rank, since the real factor width decomposition in Equation~\eqref{eq:real_fack_decomp} contains twice as many terms as the complex factor width decomposition in Equation~\eqref{eq:complex_fack_decomp}. We thus obtain the following bounds:

\begin{corollary}\label{cor:frank_inequality_RvC}
    Suppose $A \in \M_n^+$ has all entries real and factor width $k$. Then
    \[
        \mathrm{fran}_k^{\mathbb{C}}(A) \leq \mathrm{fran}_k^{\mathbb{R}}(A) \leq 2\mathrm{fran}_k^{\mathbb{C}}(A).
    \]
\end{corollary}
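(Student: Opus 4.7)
The plan is to read both inequalities directly off the proof of Proposition~\ref{prop:factor_width_RvsC}, without any new ideas. The only thing to be careful about is that each piece we produce retains support size at most $k$.

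First I would prove the left inequality. Any real factor-width-$k$ decomposition of $A$ with $r = \mathrm{fran}_k^{\mathbb{R}}(A)$ terms is, tautologically, also a complex factor-width-$k$ decomposition (since $\R^n \subset \C^n$ and the vectors' support sizes do not change when we reinterpret them as complex vectors). Hence $\mathrm{fran}_k^{\mathbb{C}}(A) \le \mathrm{fran}_k^{\mathbb{R}}(A)$.

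For the right inequality, I would take an optimal complex decomposition
\[
    A = \sum_{j=1}^{r} \mathbf{v}_j \mathbf{v}_j^*, \qquad r = \mathrm{fran}_k^{\mathbb{C}}(A),
\]
with each $\mathbf{v}_j \in \C^n$ supported on at most $k$ indices. Setting $\mathbf{x}_j := \mathrm{Re}(\mathbf{v}_j)$ and $\mathbf{y}_j := \mathrm{Im}(\mathbf{v}_j)$, the argument in the proof of Proposition~\ref{prop:factor_width_RvsC} shows that
\[
    A = \sum_{j=1}^{r} \mathbf{x}_j \mathbf{x}_j^* + \sum_{j=1}^{r} \mathbf{y}_j \mathbf{y}_j^*.
\]
The key observation is that $\mathrm{supp}(\mathbf{x}_j) \subseteq \mathrm{supp}(\mathbf{v}_j)$ and $\mathrm{supp}(\mathbf{y}_j) \subseteq \mathrm{supp}(\mathbf{v}_j)$, so each of the $2r$ real vectors on the right-hand side still has at most $k$ non-zero entries. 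This is therefore a valid real factor-width-$k$ decomposition of $A$ with $2r$ terms, giving $\mathrm{fran}_k^{\mathbb{R}}(A) \le 2\mathrm{fran}_k^{\mathbb{C}}(A)$.

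There is no real obstacle here; the result is essentially a bookkeeping corollary of the preceding proposition. The only mild subtlety worth flagging in the write-up is that some of the $\mathbf{x}_j$ or $\mathbf{y}_j$ may be zero (e.g.\ if $\mathbf{v}_j$ is purely real or purely imaginary), in which case those terms can be dropped and the bound $2\mathrm{fran}_k^{\mathbb{C}}(A)$ is not tight; but the stated inequality holds regardless.
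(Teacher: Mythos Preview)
Your proposal is correct and takes essentially the same approach as the paper, which simply notes that the corollary follows immediately from the proof of Proposition~\ref{prop:factor_width_RvsC}: the complex decomposition~\eqref{eq:complex_fack_decomp} with $r$ terms yields the real decomposition~\eqref{eq:real_fack_decomp} with $2r$ terms, and the supports of $\mathbf{x}_j,\mathbf{y}_j$ are contained in that of $\mathbf{v}_j$.
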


However, there are also some cases where we can strengthen these inequalities to equality. For example, it is a standard linear algebra fact that the rank of a matrix with real entries does not depend on whether the ground field is taken to be $\R$ or $\C$, so $\mathrm{fran}_n^{\R}(A) = \mathrm{fran}_n^{\C}(A)$ for such matrices $A$. It is similarly not difficult to show that $\mathrm{fran}_1^{\R}(A) = \mathrm{fran}_1^{\C}(A)$. The following proposition shows that an analogous equality holds for factor-width-$2$ rank.

\begin{proposition}\label{prop:real_complex_facwidranktwo}
    Suppose $A \in \M_n^+$ has all entries real and factor width at most $2$. Then
    \[
        \mathrm{fran}_2^{\C}(A) = \mathrm{fran}_2^{\R}(A).
    \]
\end{proposition}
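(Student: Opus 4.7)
Thanks to Corollary~\ref{cor:frank_inequality_RvC}, it suffices to prove the reverse inequality $\mathrm{fran}_2^{\mathbb{R}}(A) \leq \mathrm{fran}_2^{\mathbb{C}}(A)$, i.e., to convert a complex factor-width-$2$ decomposition of $A$ with $r$ terms into a real one with at most $r$ terms. The main idea is that for $k = 2$ each rank-$1$ summand $\mathbf{v}_j\mathbf{v}_j^*$ is supported in a single $2\times 2$ principal block, so the problem decouples into independent questions about (at most) $2\times 2$ matrices, where the real and complex ranks of a real matrix are well known to agree.

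The plan is as follows. Start with a complex decomposition $A = \sum_{j=1}^{r} \mathbf{v}_j \mathbf{v}_j^*$ with each $\mathbf{v}_j$ supported on at most $2$ indices. Partition the index set $\{1,\dots,r\}$ by the support of $\mathbf{v}_j$: let $S_a$ collect those with $\text{supp}(\mathbf{v}_j) \subseteq \{a\}$, and for $a<b$ let $S_{a,b}$ collect those with $\text{supp}(\mathbf{v}_j) = \{a,b\}$. The single-index group is easy: each $\mathbf{v}_j$ with $j \in S_a$ is of the form $c_j \mathbf{e}_a$, and replacing it by $|c_j|\mathbf{e}_a$ gives an equal rank-$1$ real PSD matrix, so $|S_a|$ complex terms yield $|S_a|$ real terms (in fact they could be collapsed to a single one, but this is unnecessary). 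For each pair $a<b$, let $M_{a,b} := \sum_{j \in S_{a,b}} \mathbf{v}_j\mathbf{v}_j^*$, which is nonzero only on the $2\times 2$ principal block indexed by $\{a,b\}$. Restricted to that block, $M_{a,b}$ is a $2\times 2$ PSD matrix whose off-diagonal entry coincides with $a_{a,b}$ (since no other vectors in the decomposition contribute to position $(a,b)$), hence $M_{a,b}$ is a real PSD matrix.

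Now apply the two standard facts that (i) the rank of a real matrix is the same whether we view it over $\mathbb{R}$ or $\mathbb{C}$, and (ii) every real PSD matrix of rank $s$ can be written as a sum of exactly $s$ real rank-$1$ PSD matrices via its spectral decomposition. Since $M_{a,b}$ has complex rank at most $|S_{a,b}|$ and at most $2$, its real rank is at most $\min(|S_{a,b}|,2)$, and hence $M_{a,b}$ admits a real factor-width-$2$ decomposition with at most $\min(|S_{a,b}|,2) \leq |S_{a,b}|$ rank-$1$ terms supported on $\{a,b\}$. Summing over all groups and pairs produces a real factor-width-$2$ decomposition of $A$ with total length
\[
    \sum_{a} |S_a| + \sum_{a<b} \min(|S_{a,b}|,2) \;\leq\; \sum_{a} |S_a| + \sum_{a<b} |S_{a,b}| \;=\; r,
\]
proving $\mathrm{fran}_2^{\mathbb{R}}(A) \leq r = \mathrm{fran}_2^{\mathbb{C}}(A)$.

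The argument is essentially just careful bookkeeping; the only subtle point is the case $|S_{a,b}| = 1$, where a bare application of ``real rank $\leq 2$'' would waste a term. This is absorbed by invoking equality of real and complex rank for a real matrix, which forces $M_{a,b}$ to have real rank exactly $1$ and hence to be of the form $\mathbf{u}\mathbf{u}^T$ for some real $\mathbf{u}$. The decoupling across distinct pairs, which drives the whole reduction, relies crucially on $k=2$ (each summand sits in a single $2\times 2$ block), so this approach is not expected to generalize verbatim to $k \geq 3$.
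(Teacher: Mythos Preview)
Your proof is correct and follows essentially the same approach as the paper: partition the terms of a minimal complex decomposition by their support, handle single-index terms by taking absolute values, and for each pair $\{a,b\}$ observe that the corresponding block sum $M_{a,b}$ is a real $2\times 2$ PSD matrix (its off-diagonal entry being $a_{a,b}$) which can therefore be re-expressed with at most $\min(|S_{a,b}|,2)$ real rank-$1$ terms. The only cosmetic difference is that the paper splits the pair case into $|I_{i,j}|=1$ (where it explicitly divides by a complex sign) and $|I_{i,j}|\geq 2$ (where it invokes the real spectral decomposition), whereas you handle both subcases uniformly via the equality of real and complex rank; your packaging is arguably a bit cleaner, but the underlying argument is identical.
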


\begin{proof}
    As noted by Corollary~\ref{cor:frank_inequality_RvC}, we have $\mathrm{fran}_k^{\mathbb{C}}(A) \leq \mathrm{fran}_k^{\mathbb{R}}(A)$ for all $k$, so we just need to prove that $\mathrm{fran}_2^{\mathbb{R}}(A) \leq \mathrm{fran}_2^{\mathbb{C}}(A)$.

    To this end, let $r = \mathrm{fran}_2^{\mathbb{C}}(A)$ and let
    \begin{align}\label{eq:complex_facktwo_decomp}
        A = \sum_{\ell=1}^r \mathbf{v}_\ell\mathbf{v}_\ell^*,
    \end{align}
    be a minimal complex factor-width-$2$ decomposition of $A$. Our goal is to construct a real factor-width-$2$ decomposition using no more than $r$ vectors. To this end, first notice that if any $\mathbf{v}_\ell$ contains just one non-zero entry that we can replace it by its entrywise absolute value $\mathbf{w}_\ell = |\mathbf{v}_\ell|$ (which is real) without changing the sum~\eqref{eq:complex_facktwo_decomp}. For this reason, for the remainder of the proof we just consider the vectors $\mathbf{v}_\ell$ that contain two non-zero entries.
    
    Fix integers $1 \leq i \neq j \leq r$ and let $I_{i,j} := \{\ell : [\mathbf{v}_\ell\mathbf{v}_\ell^*]_{i,j} \neq 0 \}$. In other words, $I_{i,j}$ is the set of indices $\ell$ for which the two non-zero entries of $\mathbf{v}_\ell$ are its $i$-th and $j$-th entries. We now split into two cases:
    \begin{itemize}
        \item If $|I_{i,j}| = 1$ then let $m$ denote the single member of $I_{i,j}$. Then the $(i,j)$-entry of $A$ (which is real) equals $[\mathbf{v}_m\mathbf{v}_m^*]_{i,j}$. If we define $\mathbf{w}_m = \mathbf{v}_m/\mathrm{sign}([\mathbf{v}_m]_i)$ (where $\mathrm{sign}([\mathbf{v}_m]_i)$ is the complex sign of the $i$-th entry of $\mathbf{v}_m$) then $\mathbf{w}_m$ is real and we have $\mathbf{w}_m\mathbf{w}_m^* = \mathbf{v}_m\mathbf{v}_m^*$. We can thus replace $\mathbf{v}_m$ by $\mathbf{w}_m$ in the sum~\eqref{eq:complex_facktwo_decomp} without changing its value.

        \item If $|I_{i,j}| \geq 2$ then the matrix
        \[
            \sum_{\ell \in I_{i,j}} \mathbf{v}_\ell\mathbf{v}_\ell^*
        \]
        equals zero outside of a single $2 \times 2$ principal submatrix and thus has rank at most $2$. Since it is real (its only potentially non-zero off-diagonal entries are $a_{i,j}$ and $a_{j,i}$), there exist (via the real singular value decomposition, for example) real vectors $\mathbf{w}_1$ and $\mathbf{w}_2$ with at most $2$ non-zero entries such that
        \[
            \sum_{\ell \in I_{i,j}} \mathbf{v}_\ell\mathbf{v}_\ell^* = \mathbf{w}_1\mathbf{w}_1^* + \mathbf{w}_2\mathbf{w}_2^*.
        \]
    \end{itemize}
    By replacing ``$\mathbf{v}_\ell$'' vectors by real ``$\mathbf{w}_\ell$'' vectors as described above, we can turn the complex factor-width-$2$ decomposition~\eqref{eq:complex_facktwo_decomp} into a real one with no more terms, which completes the proof.
\end{proof}

The proof of the above proposition does not extend straightforwardly to the case when $k \geq 3$, since in these cases the $k \times k$ principal submatrices don't just overlap on the diagonal. However, we are also not aware of any matrix $A \in \M_n(\R)$ and integer $k \geq 3$ for which $\mathrm{fran}_k^{\C}(A) \neq \mathrm{fran}_k^{\R}(A)$. With all this being said, we have an outstanding question which we leave as an open problem.

\begin{question}\label{ques:real_complex_fwr}
    Are the real and complex factor width ranks of a real matrix equal to each other in general? That is, given $A \in \M_n^+(\R)$ with factor width at most $k$, is it true that 
    \[
        \mathrm{fran}_k^{\C}(A) = \mathrm{fran}_k^{\R}(A)?
    \]
\end{question}

In spite of the above question still being open, throughout the remainder of the paper, we prove several bounds on $\mathrm{fran}_k(A)$ that hold regardless of whether the ground field is $\R$ or $\C$, and we therefore state these bounds without specifying the ground field.

\section{Connection with Bandwidth of a Matrix}\label{sec:banded_connection}

Recall that the \emph{bandwidth} of a matrix $A$ is the smallest positive integer $k$ with the property that $a_{i,j} = 0$ whenever $|i-j| \geq k$ (so, for example, diagonal matrices have bandwidth $1$ and tridiagonal matrices have bandwidth $2$) \cite[Chapter 7.2]{bandwidth}.\footnote{Some sources define the bandwidth of a matrix to be $1$ less than the definition used herein, so that diagonal and tridiagonal matrices have bandwidth $0$ and $1$, respectively.} Bandwidth has a natural interpretation as a measure of how ``close'' to diagonal a matrix is: the larger the bandwidth, the farther from being diagonal it is. The same intuition holds for the factor width of a matrix, which perhaps makes the following connection between these two notions unsurprising:

\begin{theorem}\label{thm:factor_width_and_bandwidth}
    If $A \in \mathcal{M}_n^{+}$ has bandwidth $k$ then it has factor width at most $k$. Furthermore, if $k \in \{1,2\}$ then its factor width equals $k$.
\end{theorem}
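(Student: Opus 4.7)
The plan is to prove the first statement by induction on $n$ via a single Schur-complement peel, and then obtain the second statement by combining this upper bound with a trivial structural lower bound. Throughout, I would write matrices in $2\times2$ block form with a scalar top-left corner so that I can split off a rank-one summand whose support sits in the first $k$ coordinates, leaving an $(n-1)\times(n-1)$ problem of the same bandwidth.

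For the inductive step, with $A$ of size $n > k$ and bandwidth at most $k$, I would partition
\[
A = \begin{pmatrix} \alpha & b^* \\ b & C \end{pmatrix},
\]
where $\alpha = a_{1,1} \geq 0$ (since $A$ is PSD), the vector $b \in \mathbb{F}^{n-1}$ is nonzero only in its first $k-1$ entries (by the bandwidth assumption), and $C \in \M_{n-1}^+$ again has bandwidth at most $k$. If $\alpha > 0$, set $\mathbf{v} = (\sqrt{\alpha},\, b^T/\sqrt{\alpha})^T$, which has at most $k$ nonzero entries (in positions $1,\dots,k$). The Schur-complement identity gives
\[
A - \mathbf{v}\mathbf{v}^* = \begin{pmatrix} 0 & 0 \\ 0 & C - bb^*/\alpha \end{pmatrix},
\]
and since $bb^*/\alpha$ is supported on the top-left $(k-1)\times(k-1)$ block, the Schur complement $C - bb^*/\alpha$ is PSD and of bandwidth at most $k$. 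The inductive hypothesis applied to this $(n-1)\times(n-1)$ matrix, combined with the summand $\mathbf{v}\mathbf{v}^*$, yields a factor-width-at-most-$k$ decomposition of $A$. If instead $\alpha = 0$, positive semidefiniteness forces the whole first row and column of $A$ to vanish (as noted in footnote~\ref{foot:zero_in_diag}), so $A$ collapses to its $(n-1)\times(n-1)$ principal submatrix $C$ and induction again applies. The base case $n \leq k$ is immediate, since every vector in $\mathbb{F}^n$ already has at most $n \leq k$ nonzero entries.

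For the second statement, I would combine the bound just established with matching lower bounds. When $k = 1$, bandwidth $1$ means $A$ is a nonnegative diagonal matrix, which has factor width exactly $1$. When $k = 2$, bandwidth $2$ means $A$ is tridiagonal with at least one nonzero off-diagonal entry; since the factor-width-$1$ matrices are precisely the nonnegative diagonal matrices (as recalled in Section~\ref{sec:fac_wid}), such an $A$ has factor width at least $2$, and together with the first part this pins it at exactly $2$.

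The only subtlety I anticipate is the degenerate case $\alpha = 0$, which superficially breaks the Schur-complement peel but is handled immediately by the standard PSD observation cited above. I would not expect the equality half of the second statement to extend beyond $k = 2$: once $k \geq 3$ a matrix of bandwidth exactly $k$ need not be forced away from factor width $<k$, since its nonzero entries could all fit inside $(k-1)$-element index sets that need not be consecutive.
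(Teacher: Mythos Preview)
Your argument is correct. The inductive Schur-complement peel is exactly one step of the banded Cholesky factorization, and the key observation---that $bb^*/\alpha$ is supported in the top-left $(k-1)\times(k-1)$ block, so the Schur complement retains bandwidth at most $k$---is sound. Your treatment of the equality case for $k\in\{1,2\}$ is the same as the paper's.

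However, the route differs from the paper's main proof. The paper argues via chordal graph theory: it notes that the zero pattern of a bandwidth-$k$ matrix corresponds to a chordal graph, invokes the known result that extreme rays of the PSD cone with a chordal sparsity pattern are rank one, and observes that such rank-one matrices automatically have vectors with at most $k$ nonzero entries. Your approach is instead the elementary Cholesky-style argument, which the paper explicitly flags (in a footnote) as an alternate proof and in fact deploys later in proving Theorem~\ref{thm:tridiag_fac_two_rank}. The chordal-graph route is more conceptual and immediately generalizes to any chordal sparsity pattern, not just bands; your approach is self-contained, constructive, and---since it is essentially Cholesky---already contains the germ of the stronger conclusion $\mathrm{fran}_k(A)=\mathrm{rank}(A)$ that the paper establishes separately.
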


In particular, diagonal matrices have factor width and bandwidth both equal to $1$ (which we already knew) and tridiagonal matrices with at least $1$ non-zero off-diagonal entry have factor width and bandwidth both equal to $2$.

The equality case of Theorem~\ref{thm:factor_width_and_bandwidth} cannot be extended any further: matrices with factor width $2$ can have any bandwidth. To see this, let $A \in \mathcal{M}_n$ be a matrix width all of its diagonal entries equal to $2k-2$ and all entries on its first $k-1$ super-diagonals and sub-diagonals equal to $1$:
\[
    A = \begin{bmatrix}
        2k-2 & 1 & 1 & \cdots & 1 & 0 & 0 & \cdots & 0 & 0 & 0 \\
        1 & 2k-2 & 1 & \cdots & 1 & 1 & 0 & \cdots & 0 & 0 & 0 \\
        1 & 1 & 2k-2 & \cdots & 1 & 1 & 1 & \cdots & 0 & 0 & 0 \\
        \vdots & \vdots & \vdots & \ddots & \vdots & \vdots & \vdots & \ddots & \vdots & \vdots & \vdots \\
        1 & 1 & 1 & \cdots & 2k-2 & 1 & 1 & \cdots & 1 & 0 & 0 \\
        0 & 1 & 1 & \cdots & 1 & 2k-2 & 1 & \cdots & 1 & 1 & 0 \\
        0 & 0 & 1 & \cdots & 1 & 1 & 2k-2 & \cdots & 1 & 1 & 1 \\
        \vdots & \vdots & \vdots & \ddots & \vdots & \vdots & \vdots & \ddots & \vdots & \vdots & \vdots \\
        0 & 0 & 0 & \cdots & 1 & 1 & 1 & \cdots & 2k-2 & 1 & 1 \\
        0 & 0 & 0 & \cdots & 0 & 1 & 1 & \cdots & 1 & 2k-2 & 1 \\
        0 & 0 & 0 & \cdots & 0 & 0 & 1 & \cdots & 1 & 1 & 2k-2 \\
    \end{bmatrix}.
\]
Then $A$ clearly has bandwidth $k$, but it is diagonally dominant and not diagonal so, by Lemma~\ref{lem:diag_dominance}, has factor width equal to $2$.

In order to prove Theorem~\ref{thm:factor_width_and_bandwidth}, we will first prove a more general result that shows that if the non-zero pattern of a positive semidefinite matrix arises from a chordal graph $G$ then its factor width is bounded by a well-studied graph parameter: the clique number $\omega(G)$ of $G$ (i.e., the size of the largest clique in $G$). To make this statement precise, we first clarify what we mean when we say that the non-zero pattern of a matrix ``arises from'' a graph. Given a (simple, undirected) graph $G = (V,E)$ with $|V| = n$, we consider the following set of positive semidefinite matrices:
\begin{align}\label{eq:nonzero_pattern_G}
    \mathcal{M}_G^{+} \defeq \big\{ A \in \mathcal{M}_n^{+} : a_{i,j} = 0 \ \text{whenever} \ i \neq j \ \text{and} \ (i,j) \notin E \big\}.
\end{align}
In particular, we do not place any restrictions on the diagonal entries of the members of $\mathcal{M}_G^{+}$. Then we have the following result:
    
\begin{theorem}\label{thm:chordal_main}
    Suppose $G$ is a chordal graph (i.e., every cycle of $G$ with four or more vertices has a chord) and $A \in \mathcal{M}_G^{+}$ (where $\mathcal{M}_G^{+}$ is as in Equation~\eqref{eq:nonzero_pattern_G}). Then the factor width of $A$ is no larger than $\omega(G)$. Furthermore, if $\omega(G) \in \{1,2\}$ then the factor width of $A$ is equal to $\omega(G)$. 
\end{theorem}

\begin{proof}
    Since $\mathcal{M}_G^{+}$ is the intersection of the convex set $\mathcal{M}_n^{+}$ and the subspace determined by its zero pattern, $\mathcal{M}_G^{+}$ is convex. It is well-known (see \cite[Theorem~1.1]{AHMR88} or \cite[Theorem~2.4]{PPS89}) that, since $G$ is chordal, the extreme points of $\mathcal{M}_G^{+}$ all have rank $1$.

    It follows that every member of $\mathcal{M}_G^{+}$ can be written as a sum of rank-$1$ members of $\mathcal{M}_G^{+}$. However, each rank-$1$ member $\mathbf{v}\mathbf{v}^* \in \mathcal{M}_G^{+}$ is such that $\mathbf{v}$ has at most $\omega(G)$ non-zero entries (otherwise there would be a principal submatrix of $\mathbf{v}\mathbf{v}^*$ that contains no zeros, and thus a clique of size larger than $\omega(G)$ in $G$). It follows that every member of $\mathcal{M}_G^{+}$ can be written as a sum of matrices of the form $\mathbf{v}\mathbf{v}^*$, where $\mathbf{v}$ has at most $\omega(G)$ non-zero entries (i.e., every member of $\mathcal{M}_G^{+}$ has factor width at most $\omega(G)$).

    The claim about equality when $\omega(G) \in \{1,2\}$ follows from the fact that the sets of matrices with factor width equal to $1$ is equal to the set $\mathcal{M}_G^{+}$ when $G$ is the graph with $n$ unconnected vertices (i.e., the graph on $n$ vertices with $\omega(G) = 1$): both of these sets equal the set of diagonal positive semidefinite matrices.
\end{proof}

\begin{proof}[Proof of Theorem~\ref{thm:factor_width_and_bandwidth}]
    Observe that if $G = (V,E)$ with $V = \{1,2,\ldots,n\}$ and $E = \{(i,j) : |i-j| < k \}$ then $G$ is chordal, $\omega(G) = k$, and $\mathcal{M}_G^{+}$ is the set of positive semidefinite matrices with bandwidth at most $k$. Applying Theorem~\ref{thm:chordal_main} gives the result.\footnote{We give an alternate proof of this fact, which does not use Theorem~\ref{thm:chordal_main} or any graph theory, a bit later in the proof of Theorem~\ref{thm:tridiag_fac_two_rank}.}
\end{proof}

In the $k = 2$ case, we can make Theorem~\ref{thm:factor_width_and_bandwidth} slightly more explicit by demonstrating how to come up with a factor-width-$2$ decomposition of a tridiagonal matrix:

\begin{example}\label{exam:bandwidth_two} Let $A\in \mathcal M_n^+$ be tridiagonal. 
    Let $a_1, a_2, \ldots, a_n$ denote the diagonal entries of $A$, while $b_1, b_2, \ldots, b_{n-1}$ denote the entries on the first (upper) superdiagonal of $A$. By Sylvester's criterion, we know that if $A$ is positive semidefinite then all of its principal submatrices have non-negative determinant. If $d_k$ denotes the determinant of the top-left $k \times k$ principal submatrix of $A$, then performing a cofactor expansion across the $k$-th row of $A$ tells us that
    \begin{align}\label{eq:tridiag_det_rec}
        d_k = \begin{cases}
            a_k, & \ \text{if $k = 1$,} \\
            a_kd_{k-1} - |b_{k-1}|^2d_{k-2}, &  \ \text{otherwise}.
        \end{cases}
    \end{align}
    
    If $j$ is the smallest positive integer for which $d_j = 0$ then the fact that $d_{j+1} \geq 0$ tells us, via the recurrence~\eqref{eq:tridiag_det_rec}, that $b_j = 0$. This implies that $A$ is block diagonal, where each block is tridiagonal with the property that the determinant of the submatrix obtained by erasing its final row and column is non-zero. For the remainder of the proof, we assume that $A$ itself has this property, but if it is not then the same argument can be applied to each of its diagonal blocks.
    
    By assumption, we have $d_k > 0$ for all $1 \leq k < n$. If we define $s_1 := d_1$ and $s_k := d_k/d_{k-1}$ for $2 \leq k \leq n$ then the recurrence~\eqref{eq:tridiag_det_rec} tells us that $s_k = a_k - |b_{k-1}|^2/s_{k-1}$ for all $2 \leq k \leq n$. Since $d_k > 0$ for all $2 \leq k < n$, we have $s_k > 0$ as well (and $s_n \geq 0$). It follows that we can write
    \begin{align}\begin{split}\label{eq:tridiag_decomp}
        A & = \begin{bmatrix}
            s_1 & b_1 & 0 & \cdots & 0 & 0 \\
            \overline{b_1} & |b_1|^2/s_1 & 0 & \cdots & 0 & 0 \\
            0 & 0 & 0 & \cdots & 0 & 0 \\
            \vdots & \vdots & \vdots & \ddots & \vdots & \vdots \\
            0 & 0 & 0 & \cdots & 0 & 0 \\
            0 & 0 & 0 & \cdots & 0 & 0
        \end{bmatrix} + \begin{bmatrix}
            0 & 0 & 0 & \cdots & 0 & 0 \\
            0 & s_2 & b_2 & \cdots & 0 & 0 \\
            0 & \overline{b_2} & |b_2|^2/s_2 & \cdots & 0 & 0 \\
            \vdots & \vdots & \vdots & \ddots & \vdots & \vdots \\
            0 & 0 & 0 & \cdots & 0 & 0 \\
            0 & 0 & 0 & \cdots & 0 & 0
        \end{bmatrix} + \cdots \\
        & \quad \cdots + \begin{bmatrix}
            0 & 0 & 0 & \cdots & 0 & 0 \\
            0 & 0 & 0 & \cdots & 0 & 0 \\
            0 & 0 & 0 & \cdots & 0 & 0 \\
            \vdots & \vdots & \vdots & \ddots & \vdots & \vdots \\
            0 & 0 & 0 & \cdots & s_{n-1} & b_{n-1} \\
            0 & 0 & 0 & \cdots & \overline{b_{n-1}} & |b_{n-1}|^2/s_{n-1}
        \end{bmatrix} + \begin{bmatrix}
            0 & 0 & 0 & \cdots & 0 & 0 \\
            0 & 0 & 0 & \cdots & 0 & 0 \\
            0 & 0 & 0 & \cdots & 0 & 0 \\
            \vdots & \vdots & \vdots & \ddots & \vdots & \vdots \\
            0 & 0 & 0 & \cdots & 0 & 0 \\
            0 & 0 & 0 & \cdots & 0 & s_n
        \end{bmatrix},
    \end{split}\end{align}
    which is an explicit decomposition showing that $A$ has factor width at most $2$.
\end{example}

Theorem~\ref{thm:factor_width_and_bandwidth} tells us that every matrix with bandwidth $k$ has a well-defined (finite) factor-width-$k$ rank. Our next result shows that this factor-width-$k$ rank just equals the usual rank of the matrix:

\begin{theorem}\label{thm:tridiag_fac_two_rank}
    If $A \in \mathcal{M}_n^{+}$ has bandwidth at most $k$ then $\mathrm{fran}_k(A) = \mathrm{rank}(A)$.
\end{theorem}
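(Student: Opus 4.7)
The inequality $\mathrm{fran}_k(A) \geq \mathrm{rank}(A)$ is immediate since a sum of $r$ rank-$1$ matrices has rank at most $r$. So the real content is the reverse inequality, and my plan is to prove it by induction on $n$, constructing a decomposition with exactly $\mathrm{rank}(A)$ terms by a Schur-complement-style peeling procedure. The base case $n \leq k$ is handled by the spectral decomposition: every vector in $\mathbb{F}^n$ trivially has at most $k$ non-zero entries, so $\mathrm{fran}_k(A) = \mathrm{rank}(A)$ with no further work.

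For the inductive step, I would look at the top-left entry $a_{1,1}$. If $a_{1,1} = 0$, then positive semidefiniteness forces the entire first row and column of $A$ to vanish, so $A$ restricted to indices $\{2,\dots,n\}$ is a PSD matrix of bandwidth at most $k$, same rank as $A$, and the induction hypothesis applies directly. If $a_{1,1} > 0$, define
\[
    \mathbf{v} \;:=\; \tfrac{1}{\sqrt{a_{1,1}}}\bigl(a_{1,1},\, a_{2,1},\, \ldots,\, a_{k,1},\, 0,\, \ldots,\, 0\bigr)^{\!T},
\]
which has support inside $\{1,\ldots,k\}$ precisely because $A$ has bandwidth at most $k$ (so $a_{i,1} = 0$ for $i > k$). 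Then $A' := A - \mathbf{v}\mathbf{v}^*$ has its first row and column identically zero, and, when viewed on indices $\{2,\dots,n\}$, it coincides with the Schur complement $A/a_{1,1}$. Standard Schur-complement facts give that $A'$ is PSD with $\mathrm{rank}(A') = \mathrm{rank}(A) - 1$.

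The crucial structural observation, and the one step that uses bandedness in an essential way, is that $A'$ still has bandwidth at most $k$: the only entries that Schur complementation modifies sit at positions $(i,j)$ with $i,j \in \{2,\ldots,k\}$, for which $|i-j| \leq k-2 < k$, so these are already inside the band; all other entries of $A'$ simply agree with those of $A$. Hence the inductive hypothesis applies to $A'$ (as an $(n-1)\times(n-1)$ banded PSD matrix) and yields a decomposition
\[
    A' \;=\; \sum_{j=1}^{\mathrm{rank}(A)-1} \mathbf{w}_j \mathbf{w}_j^*,
\]
where each $\mathbf{w}_j$ has at most $k$ non-zero entries (supported in $\{2,\dots,n\}$). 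Adjoining $\mathbf{v}\mathbf{v}^*$ produces the desired decomposition of $A$ with exactly $\mathrm{rank}(A)$ terms.

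I expect no serious obstacle: the only point that requires genuine care is verifying that bandedness is preserved under the Schur complement step, which is what ensures the pivot vector $\mathbf{v}$ fits inside a window of size $k$ and that the recursion stays inside the banded cone. Everything else is a direct application of the rank-additivity formula $\mathrm{rank}(A) = 1 + \mathrm{rank}(A/a_{1,1})$ and the PSD-ness of the Schur complement of a positive pivot. As a sanity check, this construction specializes in the $k=2$ case to the explicit block decomposition displayed in Example~\ref{exam:bandwidth_two}, where each $2\times 2$ block $\bigl(\begin{smallmatrix} s_i & b_i \\ \overline{b_i} & |b_i|^2/s_i\end{smallmatrix}\bigr)$ has rank exactly $1$, confirming that the count of rank-$1$ summands matches $\mathrm{rank}(A)$.
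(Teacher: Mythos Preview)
Your argument is correct and is essentially the same as the paper's: the paper invokes the Cholesky factorization $A = LL^*$ and cites the standard fact that the lower-triangular factor $L$ inherits the bandwidth of $A$, so its (at most $\mathrm{rank}(A)$) non-zero columns each have at most $k$ non-zero entries. Your Schur-complement peeling is precisely the recursion that \emph{computes} this Cholesky factor, and your verification that bandedness survives each step is exactly the bandwidth-preservation fact the paper cites; so the two proofs produce the very same vectors, with yours being the more self-contained version.
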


\begin{proof}
    The inequality $\mathrm{fran}_k(A) \geq \mathrm{rank}(A)$ follows immediately from Definition~\ref{defn:factor_width}: one well-known characterization of $\mathrm{rank}(A)$ is as the minimum number of terms in a decomposition of the form~\eqref{eq:fac_wid_decomp} where no restrictions are placed on the $\mathbf{v}_j$ vectors. We thus just need to prove the inequality $\mathrm{fran}_k(A) \leq \mathrm{rank}(A)$.
    
    To this end, define $r := \mathrm{rank}(A)$. Recall that every positive semidefinite matrix $A \in \mathcal{M}_n^{+}$ has a unique Cholesky decomposition $A = LL^*$ with $L$ lower triangular, having $r$ strictly positive diagonal entries, and $n-r$ columns that are equal to $\mathbf{0}$ \cite[Section~3.2.2]{Gen98}. Furthermore, this lower triangular matrix $L$ has bandwidth no larger than that of $A$ \cite[Algorithm~3.1]{Gen98}. Since $L$ has bandwidth at most $k$ and is lower triangular, its columns $\mathbf{v}_1$, $\mathbf{v}_2$, $\ldots$, $\mathbf{v}_n$ each have at most $k$ non-zero entries. Since at most $r$ of these columns are non-zero (which we will assume WLOG are $\mathbf{v}_1$, $\mathbf{v}_2$, $\ldots$, $\mathbf{v}_r$), we have
    \[
        A = LL^* = \sum_{j=1}^r \mathbf{v}_j\mathbf{v}_j^*,
    \]
    which is a decomposition of $A$ that shows that it has factor-width-$k$ rank at most $r$. This completes the proof.

\end{proof}

\section{Factor Width 2 Rank}\label{sec:factor_width_2_rank}

We now investigate some bounds on the factor-width-2 rank of a matrix, and some special cases in which we can compute it explicitly. We will first need some lemmas that help us specifically in the factor width $2$ case. Recall that a matrix $A\in \mathcal M_n$ is \emph{diagonally dominant} if $|a_{i,i}|\geq \sum_{i\neq j} |a_{i,j}|$ for all $1\leq i\leq n$. We restate \cite[Proposition~2]{boman2005factor} as a lemma here.

\begin{lemma}\label{lem:diag_dominance}
    If $A \in \mathcal{M}_n^{+}$ is diagonally dominant then it has factor width at most $2$.
\end{lemma}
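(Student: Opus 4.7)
The plan is to construct an explicit factor-width-$2$ decomposition of $A$ by pairing each off-diagonal entry of $A$ with a rank-$1$, $2\times 2$-supported PSD summand, and then absorbing whatever is left on the diagonal as a sum of $1\times 1$ blocks. Fix indices $1 \leq i < j \leq n$ and write $a_{i,j} = r_{i,j}\,e^{\mathrm{i}\theta_{i,j}}$, where $r_{i,j} := |a_{i,j}|$ and $\theta_{i,j}$ is any argument of $a_{i,j}$; in the real case one takes $\theta_{i,j} \in \{0,\pi\}$ so that the construction stays inside $\R$. Define $\mathbf{v}_{i,j} \in \mathbb{F}^n$ to have its $i$-th entry equal to $\sqrt{r_{i,j}}$, its $j$-th entry equal to $\sqrt{r_{i,j}}\,e^{-\mathrm{i}\theta_{i,j}}$, and all other entries equal to $0$. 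A one-line computation then shows that $\mathbf{v}_{i,j}\mathbf{v}_{i,j}^*$ is a rank-$1$ PSD matrix which equals $a_{i,j}$ in position $(i,j)$, equals $\overline{a_{i,j}} = a_{j,i}$ in position $(j,i)$, equals $|a_{i,j}|$ in each of the two diagonal positions $(i,i)$ and $(j,j)$, and is $0$ elsewhere.

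With these summands in hand, I would set
\[
    D := A - \sum_{1 \leq i < j \leq n} \mathbf{v}_{i,j}\mathbf{v}_{i,j}^*.
\]
By construction every off-diagonal entry of $D$ vanishes, and its $(i,i)$-entry equals $a_{i,i} - \sum_{j \neq i} |a_{i,j}|$. Since $A \in \M_n^+$, each $a_{i,i}$ is real and non-negative, so $|a_{i,i}| = a_{i,i}$, and the diagonal dominance hypothesis yields $a_{i,i} - \sum_{j \neq i} |a_{i,j}| \geq 0$. Hence $D$ is a diagonal PSD matrix, which decomposes trivially as $D = \sum_{i=1}^n d_{i,i}\,\mathbf{e}_i\mathbf{e}_i^*$ (each term being a rank-$1$ outer product of a vector with only a single non-zero entry). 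Concatenating this with the previous sum exhibits $A$ as a sum of matrices of the form $\mathbf{v}\mathbf{v}^*$ in which each $\mathbf{v}$ has at most two non-zero entries, which by Definition~\ref{defn:factor_width} is exactly the statement that $A$ has factor width at most $2$.

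The only bookkeeping subtlety I foresee is verifying that the phases line up correctly in the complex case: placing the factor $e^{-\mathrm{i}\theta_{i,j}}$ on the $j$-th coordinate of $\mathbf{v}_{i,j}$ (rather than the $i$-th) is what makes $[\mathbf{v}_{i,j}\mathbf{v}_{i,j}^*]_{i,j}$ evaluate to $a_{i,j}$ and not to $\overline{a_{i,j}}$, after which Hermiticity takes care of the $(j,i)$-entry automatically. Beyond that, the only fact being used is that positive semidefiniteness forces the diagonal of $A$ to be real and non-negative, so diagonal dominance delivers exactly the sign needed for $D$ to be PSD. No further obstacles appear, which is consistent with this being a classical result quoted from \cite{boman2005factor}.
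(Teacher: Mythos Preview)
Your proof is correct. The paper does not give its own proof of this lemma---it simply quotes the result from \cite{boman2005factor}---but the very decomposition you wrote down (pairing each off-diagonal entry $a_{i,j}$ with the rank-$1$ block $\sqrt{|a_{i,j}|}(\mathrm{sgn}(a_{i,j})\mathbf{e}_i+\mathbf{e}_j)$ and then absorbing the leftover diagonal) is exactly what the paper uses later, in Equation~\eqref{eq:fac_wid_2_explicit_rank} within the proof of Theorem~\ref{thm:fac_wid_rank2_3x3}, so your approach is fully in line with the paper's methods.
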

 
A matrix is \emph{scaled diagonally dominant}, also called \emph{generalized diagonally dominant}, if there exists a diagonal matrix $D \in \M_n^{+}$ for which $DAD$ is diagonally dominant \cite{bishan}. Our interest in scaled diagonal dominance comes from the fact that it is equivalent to factor width $2$. In particular, a careful reading of \cite[Theorems 8 and 9]{boman2005factor} yields the following result:

\begin{lemma}\label{lem:SDD}
    A matrix $A \in \mathcal{M}_n^{+}$ has factor width at most $2$ if and only if $A$ is scaled diagonal dominant.
\end{lemma}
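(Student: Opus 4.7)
The forward direction will be immediate: if $A$ is SDD then there is a positive diagonal $D$ with $DAD$ diagonally dominant, so Lemma~\ref{lem:diag_dominance} forces $DAD$ to have factor width at most $2$, and Lemma~\ref{lem:DAD} then transfers this to $A$ itself.

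For the converse direction, given a decomposition $A = \sum_j v_j v_j^*$ with each $v_j$ having at most $2$ nonzero entries, I aim to construct a positive diagonal $D$ making $DAD$ diagonally dominant. First I would reduce to the real nonnegative case: replacing each $v_j$ by $|v_j|$ (entrywise) yields $\tilde A := \sum_j |v_j||v_j|^T$ whose diagonal agrees with that of $A$ and whose off-diagonals satisfy $\tilde a_{ik} \geq |a_{ik}|$ by the triangle inequality, so any scaling $d$ that witnesses $\tilde A$ being SDD also witnesses $A$ being SDD. Combining this with Lemma~\ref{lem:DAD} (conjugating by $\mathrm{diag}(A)^{-1/2}$) and footnote~\ref{foot:zero_in_diag}, I may assume $A$ has real nonnegative entries and all diagonal entries equal to $1$, writing $A = \sum_j u_j u_j^T$ with each $u_j$ having at most two nonnegative entries and $\sum_j u_{j,i}^2 = A_{ii} = 1$ for every $i$.

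The crux of the proof will be showing that $\rho(A) \leq 2$. For any $x \in \R^n$, using the elementary bound $(\alpha+\beta)^2 \leq 2(\alpha^2+\beta^2)$ term-by-term,
\[
    x^T A x \;=\; \sum_j (x^T u_j)^2 \;\leq\; 2 \sum_j \sum_{i \in \mathrm{supp}(u_j)} u_{j,i}^2 x_i^2 \;=\; 2 \sum_i x_i^2 \sum_{j:\, i \in \mathrm{supp}(u_j)} u_{j,i}^2 \;=\; 2\|x\|^2,
\]
so $\rho(A) \leq 2$. Then $2I - A$ is a positive semidefinite $Z$-matrix (its off-diagonals equal $-A_{ik} \leq 0$), and by standard $M$-matrix theory there is a strictly positive vector $d$ with $(2I - A)d \geq 0$ componentwise. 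Rewriting this inequality as $d_i \geq \sum_{k \neq i} d_k A_{ik}$ shows precisely that $\mathrm{diag}(d)\,A\,\mathrm{diag}(d)$ is diagonally dominant, so $A$ is SDD.

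The step I anticipate being most delicate is producing $d > 0$ with $(2I - A)d \geq 0$. When $\rho(A) < 2$, I would simply set $d := (2I - A)^{-1}\mathbf{1}$, which is strictly positive because $(2I - A)^{-1} = \tfrac{1}{2}\sum_{m \geq 0} (A/2)^m$ has a convergent nonnegative Neumann series dominating $\tfrac{1}{2}I$. When $\rho(A) = 2$, I would decompose the nonnegative matrix $A$ into its irreducible diagonal blocks and apply Perron-Frobenius on each block where the spectral radius is attained, taking the Perron eigenvector for those blocks and the previous Neumann argument for the rest; stitching these together produces the required $d > 0$.
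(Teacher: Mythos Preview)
Your proof is correct, but note that the paper does not actually prove this lemma: it is stated as a known result extracted from \cite[Theorems~8 and~9]{boman2005factor}, so there is no in-paper argument to compare against. Your argument gives a clean self-contained proof. The forward direction via Lemmas~\ref{lem:diag_dominance} and~\ref{lem:DAD} is exactly right. For the converse, your key spectral step---normalizing the diagonal to $1$, passing to entrywise absolute values, and then bounding $\rho(A)\le 2$ via the elementary inequality $(\alpha+\beta)^2\le 2(\alpha^2+\beta^2)$---is a nice observation, and the subsequent $M$-matrix/Perron--Frobenius construction of the scaling vector $d$ is the standard machinery (indeed Boman et al.\ phrase their version through the comparison matrix and the $H$-matrix characterization of SDD, which is the same circle of ideas). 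One point worth tightening in a final write-up: in the boundary case $\rho(A)=2$ you should say explicitly that a \emph{symmetric} nonnegative matrix is permutation-similar to a block-\emph{diagonal} (not merely block upper-triangular) matrix with irreducible diagonal blocks, so the Perron vectors on the blocks with spectral radius $2$ and the Neumann-series vectors on the remaining blocks can be concatenated without any cross-block terms spoiling the inequality $(2I-A)d\ge 0$.
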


With the above known results out of the way, we now proceed to deriving bounds on the factor-width-$2$ rank of a matrix. Many of these bounds will depend on the number of non-zero entries strictly above the matrix's main diagonal. For a given matrix $A$, we denote this quantity by $\mathrm{nnzu}(A)$, and we denote the total number of non-zero off-diagonal entries of $A$ by $\mathrm{nnz}(A)$. For example, if all entries of $A \in \M_n^{+}$ are non-zero then $\mathrm{nnzu}(A) = n(n-1)/2$, and in general we have  $\mathrm{nnzu}(A) = \mathrm{nnz}(A)/2$.

\begin{lemma}\label{lem:fac_wid_two_nonzero_lb}
    Suppose $A \in \mathcal{M}_n^{+}$ has factor width $2$. Then $\mathrm{fran}_2(A) \geq \mathrm{nnzu}(A)$.
\end{lemma}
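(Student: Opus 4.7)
The plan is to start from a minimal factor-width-$2$ decomposition
\[
    A = \sum_{\ell=1}^r \mathbf{v}_\ell \mathbf{v}_\ell^*, \quad r = \mathrm{fran}_2(A),
\]
where each $\mathbf{v}_\ell$ has at most two non-zero entries, and then show an injection from the non-zero strictly upper-triangular positions of $A$ into the index set $\{1,2,\ldots,r\}$.

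The key observation is a support-counting one: for any $\ell$, the rank-$1$ matrix $\mathbf{v}_\ell \mathbf{v}_\ell^*$ has its $(i,j)$-entry equal to $[\mathbf{v}_\ell]_i \,\overline{[\mathbf{v}_\ell]_j}$, which can only be non-zero when both $i$ and $j$ lie in $\mathrm{supp}(\mathbf{v}_\ell)$. Since $|\mathrm{supp}(\mathbf{v}_\ell)| \leq 2$, this means that for an off-diagonal position $(i,j)$ with $i \neq j$, the only $\ell$ that can contribute to $a_{i,j}$ are those with $\mathrm{supp}(\mathbf{v}_\ell) = \{i,j\}$. I would therefore write, for each $i<j$,
\[
    a_{i,j} = \sum_{\ell \,:\, \mathrm{supp}(\mathbf{v}_\ell) = \{i,j\}} [\mathbf{v}_\ell]_i \,\overline{[\mathbf{v}_\ell]_j}.
\]

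The final step is then immediate. For every pair $(i,j)$ with $i<j$ and $a_{i,j} \neq 0$, the sum above must be non-empty, so I can choose an index $\ell(i,j) \in \{1,\ldots,r\}$ with $\mathrm{supp}(\mathbf{v}_{\ell(i,j)}) = \{i,j\}$. Distinct pairs $(i,j)$ yield distinct supports and therefore distinct indices, so the map $(i,j) \mapsto \ell(i,j)$ is injective from the set of non-zero strictly upper-triangular positions into $\{1,\ldots,r\}$. Counting domains and codomains gives $\mathrm{nnzu}(A) \leq r = \mathrm{fran}_2(A)$, which is the desired bound. There is no real obstacle here; the argument is essentially a bookkeeping one, and the only subtlety to mention is that vectors with a single non-zero entry (support size $1$) contribute only to diagonal entries and so are irrelevant to this counting.
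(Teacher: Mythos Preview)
Your argument is correct and is essentially the same counting idea the paper uses: the paper omits a direct proof of this lemma and instead derives it from the general Proposition~\ref{prop:fac_wid_two_nonzero}, whose proof observes that each term $\mathbf{v}_\ell\mathbf{v}_\ell^*$ with $|\mathrm{supp}(\mathbf{v}_\ell)|\le k$ has at most $k(k-1)/2$ non-zero entries strictly above the diagonal, so $\mathrm{nnzu}(A)\le r\cdot k(k-1)/2$. Your injection $(i,j)\mapsto \ell(i,j)$ is exactly this count specialized to $k=2$ (where each term covers at most one upper-triangular position), just phrased a bit more explicitly.
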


The above lemma is straightforward to prove, but we omit the proof since we prove a generalization of it a bit later, as Proposition~\ref{prop:fac_wid_two_nonzero}. Remarkably, under some additional hypotheses we actually get equality in the above lemma:

\begin{theorem}\label{thm:fac_wid_rank2_3x3}
    Suppose $n \geq 3$, $A \in \mathcal{M}_n^{+}$ has factor width~$2$, and each diagonal entry of $A$ belongs to some $3 \times 3$ principal submatrix with all entries non-zero. Then
    \[
        \mathrm{fran}_2(A) = \mathrm{nnzu}(A).
    \]
\end{theorem}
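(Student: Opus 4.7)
The lower bound $\mathrm{fran}_2(A)\ge\mathrm{nnzu}(A)$ is immediate from Lemma~\ref{lem:fac_wid_two_nonzero_lb}, so the plan is to construct a factor-width-$2$ decomposition of $A$ with exactly $m:=\mathrm{nnzu}(A)$ rank-$1$ terms, one per non-zero off-diagonal entry. Let $G=(V,E)$ be the support graph of $A$, where $\{i,j\}\in E$ whenever $a_{ij}\ne 0$. For each edge $e=\{i,j\}\in E$, I would seek a vector $\mathbf{v}_e = x_e\, e_i + y_e\, e_j$ satisfying $x_e\,\overline{y_e}=a_{ij}$. Setting $s_e^i:=|x_e|^2$ and $s_e^j:=|y_e|^2$, the identity $A=\sum_{e\in E}\mathbf{v}_e\mathbf{v}_e^*$ reduces to the nonlinear system
\[
s_e^i\, s_e^j \;=\; |a_{ij}|^2 \quad (e=\{i,j\}\in E), \qquad \sum_{e\ni i} s_e^i \;=\; a_{ii} \quad (i\in V),
\]
in strictly positive unknowns $s_e^i>0$ (after which the phases of $x_e,y_e$ can be matched to realize each $a_{ij}$). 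So the theorem reduces to existence of a strictly positive solution to this system.

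For existence, the plan is to start with a relaxed solution supplied by the factor-width-$2$ hypothesis and tighten it using the triangles. Aggregating any factor-width-$2$ decomposition of $A$ by supports (and pushing any $2$-support vector whose support is not an edge of $G$ into its diagonal part) yields an assignment $s_e^i\ge 0$ for each directed edge, together with nonnegative residues $\sigma_i\ge 0$, with $s_e^i s_e^j\ge |a_{ij}|^2$ and $\sum_{e\ni i} s_e^i+\sigma_i=a_{ii}$. The aim is to drive every slack $s_e^i s_e^j-|a_{ij}|^2$ and every residue $\sigma_i$ to $0$. The triangle hypothesis is decisive: for each vertex $i$ it furnishes a triangle $T_i=\{i,j_i,k_i\}\subseteq V$ all of whose edges lie in $E$. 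Parameterizing the three edge-ratios on $T_i$ freely (one per edge, with the opposite endpoint determined by the product constraint), a direct Jacobian computation shows that the map from these three ratios to the three triangle-restricted vertex sums is generically nonsingular, so a small residue $\sigma_i$ at $i$ can be absorbed into the edges of $T_i$ by a local ``triangle move'' that leaves the vertex sums at $j_i$, $k_i$, and every vertex outside $T_i$ unchanged.

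The main obstacle will be globalizing this local step, since triangles at different vertices can share edges and so individual triangle moves do not commute. I would close the argument with a continuity/path-lifting argument: first tighten every product inequality to equality (possibly at the cost of increasing some $\sigma_i$), then lift the straight-line path from the resulting residue vector $(\sigma_1,\ldots,\sigma_n)$ to the origin in $\mathbb{R}^n_{\ge 0}$ through the implicit function theorem, obtaining a continuous path of strictly positive edge-parameter assignments satisfying both the product and vertex-sum constraints along the way. The factor-width-$2$/scaled-diagonal-dominance hypothesis (Lemma~\ref{lem:SDD}) prevents any $s_e^i$ from collapsing to $0$ along the path, and the triangle hypothesis keeps the relevant Jacobian full rank, so the lift reaches the endpoint and produces the positive solution needed to build the desired $\mathrm{nnzu}(A)$-term decomposition.
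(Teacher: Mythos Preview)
Your strategy---one rank-$1$ term per edge of the support graph, with the triangle hypothesis used to absorb the diagonal ``residues'' $\sigma_i$---is exactly the paper's, and your reduction to the nonlinear system $s_e^i s_e^j=|a_{ij}|^2$, $\sum_{e\ni i}s_e^i=a_{ii}$ is correct. The gap is in how you close the argument. Your path-lifting plan rests on two assertions that are not established: (i) that the Jacobian of the vertex-sum map stays full rank all along the lift, and (ii) that the $s_e^i$ stay positive. For (ii) there is in fact an easy a~priori bound ($s_e^i\le a_{ii}$ forces $s_e^j=|a_{ij}|^2/s_e^i\ge|a_{ij}|^2/a_{ii}>0$), so that part can be saved, though not via Lemma~\ref{lem:SDD} as you suggest. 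For (i), however, already on a single triangle the $3\times 3$ Jacobian of the map $(t_{12},t_{13},t_{23})\mapsto(\text{vertex sums})$ has determinant of the form $\beta-\alpha\gamma$ with $\alpha,\beta,\gamma>0$ the ratios $|a_{ij}|^2/t_{ij}^2$, which can vanish; so ``generically nonsingular'' does not upgrade to ``nonsingular along the path,'' and the implicit function theorem does not by itself carry the lift to the endpoint.

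The paper sidesteps this entirely by processing \emph{one vertex at a time} and replacing the local implicit-function step with an explicit, global construction. After scaling $A$ to be diagonally dominant (Lemma~\ref{lem:SDD}), the equality case $a_{jj}=\sum_{i\ne j}|a_{ij}|$ has the obvious $\mathrm{nnzu}(A)$-term decomposition. The general case is $A=B+\sum_j d_j\mathbf{e}_j\mathbf{e}_j^*$ with $B$ in the equality case, and it suffices to show that adding $d\mathbf{e}_m\mathbf{e}_m^*$ for any $d>0$ does not increase $\mathrm{fran}_2$. This reduces to a purely $3\times 3$ claim on the triangle through $m$: given a three-term factor-width-$2$ decomposition of a $3\times 3$ matrix with all off-diagonals nonzero, one can increase a single diagonal entry by any amount while keeping the other two diagonals and all off-diagonals fixed. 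The paper proves this not by IFT but by writing down an explicit one-parameter family of triples $(\mathbf{u}_\xi,\mathbf{v}_\xi,\mathbf{w}_\xi)$ whose sum has the correct off-diagonals and second/third diagonals for every $\xi$, and whose first diagonal entry $f(\xi)$ is a continuous function with $f(x^+)=1$ and $f(\xi)\to\infty$; the intermediate value theorem then hits every $d>0$. This gives a \emph{large} move at each vertex, so no path-lifting or Jacobian analysis is needed. If you want to repair your argument, the cleanest fix is to abandon the global lift and adopt this one-vertex-at-a-time reduction to the $3\times 3$ case.
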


\begin{proof}
    By Lemma~\ref{lem:SDD}, we can assume without loss of generality that $A$ is diagonally dominant. We start by proving the proposition in the special case each of the diagonal dominance inequalities hold with equality; that is, when
    \begin{align}\label{eq:dd_equality}
        a_{j,j} = \sum_{i \neq j}|a_{i,j}| \quad \text{for all} \quad 1 \leq j \leq n.
    \end{align}
    
    In this special case, we can write
    \begin{align}\label{eq:fac_wid_2_explicit_rank}
        A = \sum_{i=1}^n\sum_{j=i+1}^n \mathbf{v}_{i,j}\mathbf{v}_{i,j}^*,
    \end{align}
    where $\mathbf{v}_{i,j} = \sqrt{|a_{i,j}|}(\mathrm{sgn}(a_{i,j})\mathbf{e}_i + \mathbf{e}_j)$ and $\mathrm{sgn}(a_{i,j}) = a_{i,j}/|a_{i,j}|$ is the complex sign of $a_{i,j}$. Since each $\mathbf{v}_{i,j}$ has at most $2$ (in fact, exactly $2$) non-zero entries, and the sum~\eqref{eq:fac_wid_2_explicit_rank} contains $n(n-1)/2$ terms, $\mathrm{nnz}(A)/2$ of which are non-zero, which shows that $\mathrm{fran}_2(A) \leq \mathrm{nnz}(A)/2$. When combined with Lemma~\ref{lem:fac_wid_two_nonzero_lb}, this tells us that $\mathrm{fran}_2(A) = \mathrm{nnz}(A)/2$, completing the proof of the special case where Equation~\eqref{eq:dd_equality} holds.

    To complete the proof of the theorem in general, we will prove the following claim:

    \textbf{Claim~(a):} If $A$ has factor width $2$, $\mathrm{fran}_2(A) = \mathrm{nnz}(A)/2$, and each diagonal entry of $A$ belongs to some $3 \times 3$ principal submatrix with all entries non-zero, then $\mathrm{fran}_2(A + d\mathbf{e}_m\mathbf{e}_m^*) = \mathrm{fran}_2(A)$ for all indices $m$ and all scalars $d > 0$.

    The theorem follows from Claim~(a) since every diagonally dominant matrix $A$ can be written in the form $A = B + \sum_j d_j \mathbf{e}_j\mathbf{e}_j^*$ for some non-negative scalars $\{d_j\}$, where $B$ is diagonally dominant with equality (i.e., satisfying Equation~\eqref{eq:dd_equality}), so
    \[
        \mathrm{fran}_2(A) = \mathrm{fran}_2(B) = \mathrm{nnz}(B)/2 = \mathrm{nnz}(A)/2.
    \]
    
    To prove claim~(a), fix an index $m$ and a real scalar $d > 0$, and suppose $A$ has factor-width-2 decomposition
    \begin{align}\label{eq:fac_wid_2_nnz_induct}
        A = \sum_{\ell=1}^{\mathrm{nnz}(A)/2} \mathbf{v}_\ell\mathbf{v}_\ell^*.
    \end{align}
    Since this sum contains $\mathrm{nnz}(A)/2$ terms, each non-zero off-diagonal entry $a_{i,j}$ of $A$ is equal to $[\mathbf{v}_\ell\mathbf{v}_\ell^*]_{i,j}$ for some $\ell$. It follows that if we choose a $3 \times 3$ principal submatrix of $A$ containing its $(m,m)$-entry and with all entries non-zero, there are exactly $3$ terms from the sum~\eqref{eq:fac_wid_2_nnz_induct} that contribute to the off-diagonal entries of this submatrix. Call those vectors $\mathbf{x} = (r,s,0)$, $\mathbf{y} = (u,0,t)$, and $\mathbf{z} = (0,v,w)$, and write
    \begin{align}\label{eq:3x3_adjust}
        \mathbf{x}\mathbf{x}^* + \mathbf{y}\mathbf{y}^* + \mathbf{z}\mathbf{z}^* = \begin{bmatrix}
            |r|^2 + |u|^2 & r\overline{s} & u\overline{t} \\
            \overline{r}s & |s|^2 + |v|^2 & v\overline{w} \\
            \overline{u}t & \overline{v}w & |t|^2 + |w|^2
        \end{bmatrix},
    \end{align}
    where we have only displayed the $3$ rows and columns of interest.\footnote{The off-diagonal entries are also off-diagonal entries of $A$ itself. However, the diagonal entries need not be equal to the corresponding entries of $A$, since other terms in the sum~\eqref{eq:fac_wid_2_nnz_induct} can contribute to those entries.} In other words, the $3 \times 3$ principal submatrix~\eqref{eq:3x3_adjust} has factor-width-2 rank equal to $3$, and our goal is to show that if we add a positive scalar to one of its diagonal entries then it still has factor-width-2 rank equal to $3$. We state this as a new claim:

    \textbf{Claim~(b):} If $A \in \M_3^{+}$ has factor width $2$, $\mathrm{fran}_2(A) = 3$, and each off-diagonal entry of $A$ is non-zero, then $\mathrm{fran}_2(A + d\mathbf{e}_m\mathbf{e}_m^*) = 3$ for all indices $m$ and all scalars $d > 0$.

    To prove claim~(b) (and thus the theorem), we note that Lemma~\ref{lem:DAD} lets us assume without loss of generality that every diagonal entry of $A$ is equal to $1$. Then we can write
    \[
        A = \begin{bmatrix}
            1 & a & \overline{b} \\
            \overline{a} & 1 & c \\
            b & \overline{c} & 1
        \end{bmatrix}
    \]
    for some scalars $a,b,c \in \mathbb{F}$. Since $\mathrm{fran}_2(A) = 3$ and $a,b,c \neq 0$, we can write
    \[
        A = \mathbf{u}\mathbf{u}^* + \mathbf{v}\mathbf{v}^* + \mathbf{w}\mathbf{w}^*,
    \]
    for some $\mathbf{u} = (a/y, y, 0)$, $\mathbf{v} = (x, 0, b/x)$, $\mathbf{w} = (0, c/z, z)$, and $x,y,z \in \mathbb{F}$.

    Now let $\xi > x$ and define the vectors $\mathbf{u}_\xi = (a/y_\xi, y_\xi, 0)$, $\mathbf{v}_\xi = (\xi, 0, b/\xi)$, $\mathbf{w}_\xi = (0, c/z_\xi, z_\xi)$, where
    \begin{align*}
        z_\xi & = z^2 + |b|^2\left(\frac{1}{x^2} - \frac{1}{\xi^2}\right) \quad \text{and} \\
        y_\xi & = y^2 + |c|^2\left(\frac{1}{z^2} - \frac{1}{z^2 + |b|^2\left(\frac{1}{x^2} - \frac{1}{\xi^2}\right)}\right).
    \end{align*}
    Note that $\lim_{\xi \rightarrow x^{+}}\mathbf{u}_\xi = \mathbf{u}$, $\lim_{\xi \rightarrow x^{+}}\mathbf{v}_\xi = \mathbf{v}$, and $\lim_{\xi \rightarrow x^{+}}\mathbf{w}_\xi = \mathbf{w}$. A straightforward (but hideous) calculation then reveals that
    \[
        \mathbf{u}_\xi\mathbf{u}_\xi^* + \mathbf{v}_\xi\mathbf{v}_\xi^* + \mathbf{w}_\xi\mathbf{w}_\xi^* = \begin{bmatrix}
            f(\xi) & a & \overline{b} \\
            \overline{a} & 1 & c \\
            b & \overline{c} & 1
        \end{bmatrix},
    \]
    where
    \begin{align}\label{eq:xi_function}
        f(\xi) = \xi^2 + \frac{|a|^2z^2((|b|^2 + x^2z^2)\xi^2 - |b|^2x^2)}{ x^2y^2z^4\xi^2 + |b|^2(\xi^2 - x^2)(|c|^2 + y^2z^2) }.
    \end{align}
    Then $f$ is a rational function of $\xi$ that is continuous on $(x,\infty)$ and has $\lim_{\xi \rightarrow x^{+}} f(\xi) = x^2 + |a|^2/y^2 = 1$. Furthermore, the fraction on the right-hand-side of Equation~\eqref{eq:xi_function} has a finite limit as $\xi \rightarrow \infty$, so $\lim_{\xi \rightarrow \infty} f(\xi) = \infty$. It follows that the range of $f$ contains $(1,\infty)$, so every matrix of the form
    \[
        \begin{bmatrix}
            d & a & \overline{b} \\
            \overline{a} & 1 & c \\
            b & \overline{c} & 1
        \end{bmatrix}
    \]
    for $d > 1$ has factor-width-2 rank equal to $3$. This completes the proof of claim~(b) and of the theorem.
\end{proof}

We note that the strange hypothesis in the statement of Theorem~\ref{thm:fac_wid_rank2_3x3} involving $3 \times 3$ cannot be dropped. For example, tridiagonal matrices do not satisfy that hypothesis, and we saw in Theorems~\ref{thm:factor_width_and_bandwidth} and~\ref{thm:tridiag_fac_two_rank} that if $A \in \M_n^{+}$ is tridiagonal with $\mathrm{nnzu}(A) \geq 1$ then $\mathrm{fran}_2(A) = \mathrm{rank}(A)$, which can be strictly larger than $\mathrm{nnzu}(A)$.

In the special case when $A \in \mathcal{M}_n^{+}$ has all of its entries non-zero then we have $\mathrm{nnzu}(A) = n(n-1)/2$, leading to the following corollary of Theorem~\ref{thm:fac_wid_rank2_3x3}:

\begin{corollary}\label{cor:fac_wid_rank2_nonzero}
    If $n \geq 3$ and $A \in \mathcal{M}_n^{+}$ has factor width~$2$ and no entries equal to $0$ then
    \[
        \mathrm{fran}_2(A) = \frac{n(n-1)}{2}.
    \]
\end{corollary}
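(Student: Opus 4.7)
The plan is to derive this as an immediate consequence of Theorem~\ref{thm:fac_wid_rank2_3x3}, so essentially all the real work has already been done. My job is just to check that the hypotheses of that theorem are satisfied and then count.

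First, I would verify that the hypothesis about $3 \times 3$ principal submatrices is met. Since $n \geq 3$ and $A$ has no zero entries, every $3 \times 3$ principal submatrix of $A$ has all entries non-zero. In particular, for each diagonal index $i$ I can pick any two other indices $j, k$ and obtain a $3 \times 3$ principal submatrix containing the $(i,i)$-entry whose entries are all non-zero. Combined with the assumed factor width $2$, this places $A$ squarely in the setting of Theorem~\ref{thm:fac_wid_rank2_3x3}, which gives $\mathrm{fran}_2(A) = \mathrm{nnzu}(A)$.

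Second, I would compute $\mathrm{nnzu}(A)$ explicitly. Since $A$ has no zero entries, all $n(n-1)/2$ positions strictly above the diagonal are non-zero, so $\mathrm{nnzu}(A) = n(n-1)/2$. Combining this with the equality from Theorem~\ref{thm:fac_wid_rank2_3x3} yields the claimed formula.

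There is no real obstacle here; the corollary is essentially a packaging of Theorem~\ref{thm:fac_wid_rank2_3x3} for the clean special case of a matrix with no zero entries, so the proof should be one or two sentences long.
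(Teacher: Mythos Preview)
Your proposal is correct and matches the paper's approach exactly: the paper also derives this corollary directly from Theorem~\ref{thm:fac_wid_rank2_3x3} by noting that when all entries are non-zero, $\mathrm{nnzu}(A) = n(n-1)/2$.
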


Again, the hypothesis that $n \geq 3$ in the above corollary cannot be dropped, as demonstrated by any rank-$2$ matrix $A \in \mathcal{M}_2^{+}$ with all entries non-zero.

There are a few other cases where we can easily compute the factor-width-2 rank of a matrix. Recall that an \emph{arrowhead matrix} $A \in \M_n$ is one for which $a_{i,j} = 0$ whenever $i \neq 1$, $j \neq 1$, and $i \neq j$. In other words, it is a matrix of the form
\[
    A = \begin{bmatrix}
        * & * & * & * & \cdots & * \\
        * & * & 0 & 0 & \cdots & 0 \\
        * & 0 & * & 0 & \cdots & 0 \\
        * & 0 & 0 & * & \cdots & 0 \\
        \vdots & \vdots & \vdots & \vdots & \ddots & \vdots \\
        * & 0 & 0 & 0 & \cdots & *
    \end{bmatrix}.
\]
Our next result completely determines the factor width and factor width rank of arrowhead matrices:

\begin{theorem}\label{thm:arrowhead_facwid2}
    Suppose $A \in \M_n^+$ is an arrowhead matrix. Then $A$ has factor width at most $2$. Furthermore, $\mathrm{fran}_k(A) = \rank(A)$ for all $k \geq 2$.
\end{theorem}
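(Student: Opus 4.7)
The plan is to reduce the problem to a single factor-width-$2$ bound and then mimic the Cholesky argument used in Theorem~\ref{thm:tridiag_fac_two_rank}. As in that earlier result, the inequality $\mathrm{fran}_k(A) \geq \rank(A)$ holds for every PSD matrix directly from Definition~\ref{defn:rank_of_fac_wid_decomp}, since any factor-width decomposition is in particular a sum of rank-one matrices. Moreover, any vector with at most $2$ nonzero entries also has at most $k$ nonzero entries when $k \geq 2$, so $\mathrm{fran}_k(A) \leq \mathrm{fran}_2(A)$. It therefore suffices to establish the single bound $\mathrm{fran}_2(A) \leq \rank(A)$, which simultaneously gives both conclusions of the theorem (factor width at most $2$, and equality $\mathrm{fran}_k(A) = \rank(A)$ for all $k \geq 2$).

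To prove this bound, I would first permute the indices. The natural arrowhead ordering is unhelpful because column $1$ is dense, which would give a dense first column in the Cholesky factor. Let $P$ be the reversal permutation matrix sending $i \mapsto n-i+1$, and set $\widetilde{A} := PAP^T$. Then $\widetilde{A}$ is a \emph{reverse} arrowhead, i.e.\ its only off-diagonal nonzero entries lie in its last row and last column. Conjugation by a permutation matrix preserves the number of nonzero entries of every vector (if $A = \sum_j \mathbf{v}_j\mathbf{v}_j^*$ is a factor-width-$2$ decomposition, then $\widetilde{A} = \sum_j (P\mathbf{v}_j)(P\mathbf{v}_j)^*$ is one as well), so $\mathrm{fran}_2(\widetilde{A}) = \mathrm{fran}_2(A)$ and $\rank(\widetilde{A}) = \rank(A)$. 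It thus suffices to bound $\mathrm{fran}_2(\widetilde{A})$.

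Now apply the Cholesky decomposition $\widetilde{A} = LL^*$ with $L$ lower triangular, having $r := \rank(\widetilde{A})$ strictly positive diagonal entries and $n-r$ zero columns, exactly as in the proof of Theorem~\ref{thm:tridiag_fac_two_rank}. The key step is to verify that $L$ inherits the sparsity pattern of $\widetilde{A}$ in the sense that $L_{i,j} = 0$ whenever $i \notin \{j,n\}$. I would check this by direct inspection of the recursion: at each step, after eliminating column $j < n$, the Schur complement on indices $\{j+1,\ldots,n\}$ is again a reverse arrowhead, so the next column of $L$ has nonzero entries only at position $j+1$ (the diagonal) and position $n$. Consequently, every nonzero column $\mathbf{v}_j$ of $L$ has at most $2$ nonzero entries, and
\[
    \widetilde{A} = LL^* = \sum_{j : \mathbf{v}_j \neq \mathbf{0}} \mathbf{v}_j \mathbf{v}_j^*
\]
is a factor-width-$2$ decomposition of $\widetilde{A}$ with exactly $r$ terms. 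This gives $\mathrm{fran}_2(\widetilde{A}) \leq r = \rank(\widetilde{A})$, finishing the proof.

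The main (and really only) obstacle is the verification of the sparsity-preservation claim for $L$; the handling of zero pivots when $r < n$ is identical to the standard treatment invoked in Theorem~\ref{thm:tridiag_fac_two_rank} (and to the treatment of zero diagonal entries in Footnote~\ref{foot:zero_in_diag}), so it introduces no additional complication.
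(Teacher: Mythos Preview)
Your proposal is correct and in fact produces exactly the same decomposition as the paper, though the paper arrives at it by a different route. The paper works directly in the original ordering: for each $2 \leq i \leq n$ it writes down $\mathbf{v}_i$ with $[\mathbf{v}_i]_i = \sqrt{a_{i,i}}$ and $[\mathbf{v}_i]_1 = a_{1,i}/\sqrt{a_{i,i}}$, and then uses the explicit formula $\det(A) = \big(\prod_{i\geq 2} a_{i,i}\big)\big(a_{1,1} - \sum_{i\geq 2} |a_{1,i}|^2/a_{i,i}\big)$ together with $\det(A) \geq 0$ to verify that the leftover $(1,1)$ diagonal term is non-negative. Your reversal-plus-Cholesky argument outputs precisely these same vectors (they are the columns of $L$ after undoing the permutation), so the two proofs agree at the level of the decomposition itself. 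The merit of your route is that it literally recycles the machinery of Theorem~\ref{thm:tridiag_fac_two_rank}, making transparent that the banded and arrowhead cases are both instances of a chordal sparsity pattern equipped with a perfect elimination ordering; the merit of the paper's route is that the decomposition and the rank count are visible in one line without invoking the Schur-complement recursion or the rank-deficient Cholesky reference.
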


\begin{proof}
    One way to establish the fact that $A$ has factor width at most $2$ is to apply Theorem~\ref{thm:chordal_main}: if $G$ is such that $A \in \mathcal{M}_G^{+}$ then $G$ is a tree and is thus chordal and has $\omega(G) = 2$. However, since we also want to compute the factor width rank of $A$, we will also compute an explicit factor-width-2 decomposition of $A$.

    Assume without loss of generality that all of the diagonal entries of $A$ are non-zero. If any of them \emph{do} equal $0$, we can delete that entire row and column since doing so does not alter $\mathrm{fran}_k(A)$ or $\rank(A)$. Also, it suffices to just prove that $\mathrm{fran}_2(A) \leq \rank(A)$, since the inequalities
    \[
        \rank(A) = \mathrm{fran}_n(A) \leq \mathrm{fran}_{n-1}(A) \leq \cdots \leq \mathrm{fran}_3(A) \leq \mathrm{fran}_2(A)
    \]
    always hold.

    Furthermore, since we are assuming that the diagonal entries of $A$ are all non-zero, $\rank(A) \geq n-1$ (the rank of $A$'s bottom-right $(n-1) \times (n-1)$ principal submatrix), so either $\rank(A) = n$ (if $\det(A) > 0$) or $\rank(A) = n-1$ (if $\det(A) = 0$). It is straightforward to use the sum-over-permutations formula of the determinant to compute
    \[
        \det(A) = \left(\prod_{i=2}^na_{i,i}\right)\left( a_{1,1} - \sum_{i=2}^n \frac{|a_{1,i}|^2}{a_{i,i}}\right).
    \]

    In particular, this implies (since $A$ is positive semidefinite so $\det(A) \geq 0$) that $a_{1,1} \geq \sum_{i=2}^n |a_{1,i}|^2/a_{i,i}$. For each $2 \leq i \leq n$, define $\mathbf{v}_i$ to be the vector with only two non-zero entries, which are $[\mathbf{v}_i]_i = \sqrt{a_{i,i}}$ and $[\mathbf{v}_i]_1 = a_{1,i}/\sqrt{a_{i,i}}$. It is then straightforward to verify that
    \[
        [\mathbf{v}_i\mathbf{v}_i^*]_{1,1} = |a_{1,i}|^2/a_{i,i}, \quad [\mathbf{v}_i\mathbf{v}_i^*]_{1,i} = a_{1,i}, \quad \text{and} \quad [\mathbf{v}_i\mathbf{v}_i^*]_{i,i} = a_{i,i}.
    \]
    It follows that
    \[
        A = \sum_{i=2}^n \mathbf{v}_i\mathbf{v}_i^* + \begin{bmatrix}
            a_{1,1} - \sum_{i=2}^n \frac{|a_{1,i}|^2}{a_{i,i}} & 0 & 0 & \cdots & 0 \\
            0 & 0 & 0 & \cdots & 0 \\
            0 & 0 & 0 & \cdots & 0 \\
            \vdots & \vdots & \vdots & \ddots & \vdots \\
            0 & 0 & 0 & \cdots & 0
        \end{bmatrix},
    \]
    which is a factor-width-2 decomposition of $A$ containing $n-1$ terms if $\det(A) = 0$ and containing $n$ terms if $\det(A) > 0$.
\end{proof}

Another special type of matrix for which we can easily compute the factor-width-2-rank is when the matrix $A$ is block diagonal with overlapping $1 \times 1$ block corners, and no entries equal to $0$ in those blocks. That is, when the matrix can be written in the form
\[
    \begin{bmatrix}
        A_{1,1} & \mathbf{b_1} & O &  \mathbf{0} & \cdots \\
        \mathbf{b_1}^* & c_1 & \mathbf{d_1}^* & 0 & \cdots \\
        O & \mathbf{d_1} & A_{2,2} & \mathbf{b_2} & \cdots \\
        \mathbf{0}^* & 0 & \mathbf{b_2}^* & c_2 & \cdots \\
        \vdots & \vdots & \vdots & \vdots & \ddots \\
    \end{bmatrix},
\]
where the $c_j$'s are all $1 \times 1$ (i.e., scalars).

\begin{theorem}\label{thm:block_diag}
     Let $n \geq 3$ and $A \in \mathcal{M}_n^{+}$ be a block diagonal matrix each of whose blocks overlap the next block by exactly one entry, and each having no entries equal to $0$. Further, suppose $A$ has factor width~$2$. Then
    \[
        \mathrm{fran}_2(A) = \mathrm{nnzu}(A).
    \]
\end{theorem}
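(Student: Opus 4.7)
The plan is to prove the two inequalities separately, with the lower bound being immediate and the upper bound obtained by constructively splitting the overlap diagonal entries. For the lower bound, I would simply invoke Lemma~\ref{lem:fac_wid_two_nonzero_lb}, which gives $\mathrm{fran}_2(A) \geq \mathrm{nnzu}(A)$ for any factor-width-$2$ matrix.

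For the matching upper bound, I would first use Lemmas~\ref{lem:DAD} and~\ref{lem:SDD} to reduce (without loss of generality) to the case where $A$ is itself diagonally dominant. Label the overlapping blocks along the diagonal as $\tilde{B}_1, \ldots, \tilde{B}_k$ of sizes $m_1, \ldots, m_k$ (each at least $3$, as dictated by the matrix form exhibited just before the theorem), where $\tilde{B}_r$ and $\tilde{B}_{r+1}$ meet at a single shared diagonal entry, which I will also call $c_r$ by abuse of notation. The key step is to split each $c_r = c_r^{(L)} + c_r^{(R)}$ into two strictly positive pieces and then define a modified block $M_r$, obtained from $\tilde{B}_r$ by replacing its overlap diagonal entries at positions $c_{r-1}$ and $c_r$ by the appropriate split values $c_{r-1}^{(R)}$ and $c_r^{(L)}$. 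Because the row of $A$ indexed by the overlap position $c_r$ has its non-zero off-diagonals partitioned cleanly between $\tilde{B}_r$ and $\tilde{B}_{r+1}$ (the blocks do not interact outside the overlap), diagonal dominance of $A$ at that row supplies exactly the ``budget'' needed: one can distribute $c_r$ between its two sides so that each $M_r$ remains diagonally dominant, and the splits are strictly positive because the adjacent off-diagonals are non-zero.

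Once the splittings are chosen this way, each $M_r$ is diagonally dominant and hence has factor width at most $2$ by Lemma~\ref{lem:diag_dominance}; has no zero entries (off-diagonals inherited from $\tilde{B}_r$, split diagonals strictly positive); and has size $m_r \geq 3$. Corollary~\ref{cor:fac_wid_rank2_nonzero} therefore yields $\mathrm{fran}_2(M_r) = m_r(m_r-1)/2 = \mathrm{nnzu}(\tilde{B}_r)$. Since $A$ is literally the sum of the $M_r$'s embedded in their appropriate rows and columns of $\M_n$, Lemma~\ref{lem:add_FW} gives
\[
    \mathrm{fran}_2(A) \;\leq\; \sum_{r=1}^k \mathrm{fran}_2(M_r) \;=\; \sum_{r=1}^k \mathrm{nnzu}(\tilde{B}_r) \;=\; \mathrm{nnzu}(A),
\]
which, combined with the lower bound, finishes the proof.

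The main obstacle is the splitting step itself: one must verify that the diagonal dominance inequality for $A$ at each overlap row provides enough slack to be divided between the two neighbouring blocks while preserving diagonal dominance on both sides simultaneously. Fortunately, the ``one-entry overlap'' structure makes this automatic, because the off-diagonal row-sum at an overlap position is exactly the sum of the two blockwise row-sums with no crossover; after that, the counting simply sums the $\mathrm{nnzu}$ contributions of the individual blocks.
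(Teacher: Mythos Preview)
Your argument is correct and shares the paper's overall plan: Lemma~\ref{lem:fac_wid_two_nonzero_lb} for the lower bound; split each overlap diagonal entry between consecutive blocks so that each piece has factor width at most~$2$; apply Corollary~\ref{cor:fac_wid_rank2_nonzero} blockwise; and conclude via Lemma~\ref{lem:add_FW}. Where you diverge is in how the split is produced. The paper (treating two blocks and then inducting) sets $\widetilde{g}$ equal to the \emph{minimum} value for which the first block has factor width at most~$2$, and then asserts that the remainder $c-\widetilde{g}$ works for the second block. You instead pass to a diagonally dominant representative via Lemma~\ref{lem:SDD}, after which the diagonal-dominance inequality at each overlap row supplies exactly the budget needed to split that entry so that both adjacent blocks stay diagonally dominant. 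Your route is more constructive and makes it immediately clear why \emph{both} halves of the split have factor width at most~$2$, a point the paper leaves implicit. The paper's minimality device, however, buys one thing your approach gives up: if an end block happens to be $2\times 2$, the minimal choice of $\widetilde{g}$ forces that block to have rank~$1$, so its $\mathrm{fran}_2$ equals $1=\mathrm{nnzu}$, whereas a merely diagonally dominant $2\times 2$ block can have rank~$2$. Your reading that the displayed matrix form forces every block to have size at least~$3$ is correct for the interior blocks but not for the first and last, so your argument as written covers the case where all blocks have size $\geq 3$.
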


\begin{proof}
    We consider just the case where there are two overlapping diagonal blocks; the general result can then be readily established via induction.

    Suppose $A$ has the form
    \[
        A = \begin{bmatrix}
            A_{1,1} & \mathbf{b} & O \\
            \mathbf{b}^* & c & \mathbf{d}^* \\
            O & \mathbf{d} & A_{2,2}
        \end{bmatrix},
    \]
    where $A_{1,1} \in \M_s^{+}$ and $A_{2,2} \in \M_t^{+}$ are square matrices (with $n = s+t+1$), $\mathbf{b} \in \C^s$ and $\mathbf{d} \in \C^t$ are column vectors, and $c$ is a scalar. Since $\mathrm{nnzu}(A) = s(s+1)/2 + t(t+1)/2$, we know from Lemma~\ref{lem:fac_wid_two_nonzero_lb} that $\mathrm{fran}_2(A) \geq s(s+1)/2 + t(t+1)/2$. All that remains is to prove the opposite inequality.

    Define
    \[
        \widetilde{g} := \min_{g \in \R}\left\{\begin{bmatrix}
            A_{1,1} & \mathbf{b} \\
            \mathbf{b}^* & g
        \end{bmatrix} \text{has factor width at most $2$}\right\}
    \]
    (note that such a $g$ exists since we could choose $g = c$, for example, and the minimum is attained thanks to the fact that the set of matrices with factor width at most $2$ is closed). Then the matrices
    \[
        \begin{bmatrix}
            A_{1,1} & \mathbf{b} \\
            \mathbf{b}^* & \widetilde{g}
        \end{bmatrix} \quad \text{and} \quad \begin{bmatrix}
            c-\widetilde{g} & \mathbf{d}^* \\
            \mathbf{d} & A_{2,2}
        \end{bmatrix}
    \]
    both have factor width equal to $2$ and all off-diagonal entries non-zero. It follows from Corollary~\ref{cor:fac_wid_rank2_nonzero} that 
    \[
        \mathrm{fran}_2\left(\begin{bmatrix}
            A_{1,1} & \mathbf{b} \\
            \mathbf{b}^* & \widetilde{g}
        \end{bmatrix}\right) = \frac{s(s+1)}{2} \quad \text{and} \quad \mathrm{fran}_2\left(\begin{bmatrix}
            c-\widetilde{g} & \mathbf{d}^* \\
            \mathbf{d} & A_{2,2}
        \end{bmatrix}\right) = \frac{t(t+1)}{2}.
    \]
    By Lemma~\ref{lem:add_FW}, it follows that
    \[
        \mathrm{fran}_2(A) \leq \mathrm{fran}_2\left(\begin{bmatrix}
            A_{1,1} & \mathbf{b} & O \\
            \mathbf{b}^* & \widetilde{g} & \mathbf{0}^* \\
            O & \mathbf{0} & O
        \end{bmatrix}\right) + \mathrm{fran}_2\left(\begin{bmatrix}
            O & \mathbf{0} & O \\
            \mathbf{0}^* & c-\widetilde{g} & \mathbf{d}^* \\
            O & \mathbf{d} & A_{2,2}
        \end{bmatrix}\right) = \frac{s(s+1)}{2} + \frac{t(t+1)}{2},
    \]
    as claimed.
\end{proof}

\section{General Bounds on Factor Width Rank}\label{sec:factor_width_rank_bounds}

Our primary concern in this section is how to bound and/or compute the factor-width-$k$ rank of a matrix. While exact computation of this quantity in general seems difficult (factor width itself can be determined via semidefinite programming, but it is not clear whether or not the same is true of factor-width-$k$ rank), there are numerous bounds that can at least narrow it down or determine its value in some special cases. We start with the most basic ones:

\begin{proposition}\label{prop:two_rank_ubs}
    Suppose $A \in \mathcal{M}_n^{+}$ has factor width less than or equal to $k$. Then
    \begin{align*}
        \mathrm{fran}_k(A) \leq k\binom{n}{k}.
    \end{align*}
\end{proposition}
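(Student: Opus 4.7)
The plan is to leverage the equivalent reformulation of factor width given in Equation~\eqref{eq:fac_width_by_psd}: a factor width at most $k$ decomposition groups naturally by which $k\times k$ principal submatrix each summand is supported on, and each such submatrix has rank at most $k$.

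First I would start with any factor-width-$k$ decomposition of $A$ given by Definition~\ref{defn:factor_width}, say $A = \sum_j \mathbf{v}_j \mathbf{v}_j^*$ where each $\mathbf{v}_j$ has at most $k$ non-zero entries. For every $k$-subset $S \subseteq \{1,2,\ldots,n\}$, assign to $S$ the collection of indices $j$ whose support $\op{supp}(\mathbf{v}_j)$ is contained in $S$ (if the support is strictly smaller than $k$ we simply pick one enclosing $k$-subset arbitrarily so that no index is double-counted). Let
\[
    B_S \defeq \sum_{j \,:\, \op{supp}(\mathbf{v}_j) \subseteq S, \ j \text{ assigned to } S} \mathbf{v}_j \mathbf{v}_j^*,
\]
so that $A = \sum_{S} B_S$, where the sum ranges over the $\binom{n}{k}$ many $k$-subsets of $\{1,\ldots,n\}$.

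Next I would note that each $B_S$ is PSD (being a sum of PSD matrices) and is zero outside the $S \times S$ principal submatrix. Viewed as a $k \times k$ PSD matrix, $B_S$ has rank at most $k$, so by the spectral theorem it admits a decomposition $B_S = \sum_{i=1}^{k} \mathbf{w}_i^S (\mathbf{w}_i^S)^*$ in which each $\mathbf{w}_i^S$ is supported on $S$ (hence has at most $k$ non-zero entries). Substituting these into $A = \sum_S B_S$ yields
\[
    A = \sum_{S} \sum_{i=1}^{k} \mathbf{w}_i^S (\mathbf{w}_i^S)^*,
\]
which is a factor-width-$k$ decomposition with at most $k \binom{n}{k}$ rank-$1$ terms, establishing the bound.

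I do not expect a real obstacle here. The only minor care required is the bookkeeping when $\op{supp}(\mathbf{v}_j)$ has size strictly less than $k$ and therefore sits inside multiple $k$-subsets; fixing a single assignment prevents over-counting. Everything else is just the observation that a $k \times k$ PSD matrix has rank at most $k$.
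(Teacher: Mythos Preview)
Your proof is correct and follows essentially the same approach as the paper: group the decomposition by the $\binom{n}{k}$ possible $k\times k$ principal submatrices, then use that each resulting PSD block has rank at most $k$. The paper's version is just a bit terser, appealing directly to the reformulation in Equation~\eqref{eq:fac_width_by_psd} without spelling out the bookkeeping for supports of size strictly less than $k$.
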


\begin{proof}
    If $A$ has factor width at most $k$ then it can be written as a sum of $\binom{n}{k}$ positive semidefinite matrices, each of which is equal to zero outside of one of the $\binom{n}{k}$ possible $k \times k$ principal submatrices. By writing each of these matrices as a sum of $k$ rank-$1$ positive semidefinite matrices, we obtain the desired bound.
\end{proof}

For intermediate values of $k$ (e.g., when $k$ is near $n/2$), much better upper bounds are possible:

\begin{proposition}\label{prop:fac_wid_rank_basic_bounds}
    Suppose $A \in \mathcal{M}_n^{+}(\mathbb{F})$ ($\mathbb{F} = \R$ or $\mathbb{F} = \C$) has factor width less than or equal to $k$. Then
    \begin{align}\label{eq:caratheodory_frank_bound}
        \mathrm{rank}(A) \leq \mathrm{fran}_k^{\mathbb{F}}(A) \leq \begin{cases}
            n(n+1)/2 & \text{if \ $\mathbb{F} = \R$}, \\
            n^2 & \text{if \ $\mathbb{F} = \C$}.
        \end{cases}
    \end{align}
\end{proposition}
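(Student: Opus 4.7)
The lower bound $\rank(A) \le \mathrm{fran}_k^{\mathbb{F}}(A)$ is immediate. Any factor-width-$k$ decomposition $A = \sum_{j=1}^{r}\v_j\v_j^*$ is, in particular, a decomposition of $A$ as a sum of rank-$1$ positive semidefinite matrices. Since the rank of $A$ equals the minimum number of rank-$1$ positive semidefinite summands in any such decomposition (for example, via the spectral decomposition), we must have $r \ge \rank(A)$, which gives the claimed inequality.

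For the upper bound I would invoke the conic version of Carath\'eodory's theorem. Let
\[
    S_k^{\mathbb{F}} \defeq \big\{ \v\v^* : \v \in \mathbb{F}^n \text{ has at most } k \text{ non-zero entries}\big\},
\]
so that, by Definition~\ref{defn:factor_width}, the set of PSD matrices in $\M_n^+(\mathbb{F})$ with factor width at most $k$ is exactly the conic hull $\mathrm{cone}(S_k^{\mathbb{F}})$. This cone sits inside the real vector space of symmetric matrices (when $\mathbb{F} = \R$) or Hermitian matrices (when $\mathbb{F} = \C$), which have real dimension $n(n+1)/2$ and $n^2$, respectively. Carath\'eodory's theorem for conic hulls says that every point of $\mathrm{cone}(S)$, for any set $S$ in a real vector space of dimension $d$, can be written as a non-negative combination of at most $d$ elements of $S$.

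Applying this to $S = S_k^{\mathbb{F}}$, we conclude that $A$ admits an expression $A = \sum_{j=1}^{r} c_j \v_j\v_j^*$ with each $\v_j$ having at most $k$ non-zero entries, $c_j \ge 0$, and $r \le n(n+1)/2$ or $r \le n^2$ in the real and complex cases, respectively. Absorbing the scalars $\sqrt{c_j}$ into the vectors $\v_j$ yields a factor-width-$k$ decomposition with at most that many terms, which gives the upper bound in~\eqref{eq:caratheodory_frank_bound}.

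There is no real obstacle here: the only things worth double-checking are the two dimension counts (real symmetric $n \times n$ matrices form a space of real dimension $n(n+1)/2$; Hermitian $n \times n$ matrices form a real vector space of real dimension $n^2$, since each off-diagonal entry contributes a real and an imaginary degree of freedom) and the fact that Carath\'eodory's theorem applies to the conic rather than convex hull so that no ``$+1$'' appears in the bound.
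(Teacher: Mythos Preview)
Your proof is correct and follows essentially the same approach as the paper: both the lower bound argument and the use of Carath\'eodory's theorem on the real vector space of symmetric (respectively, Hermitian) matrices match. The only cosmetic difference is that the paper normalizes to trace~$1$ and applies the convex-hull form of Carath\'eodory in the $(d-1)$-dimensional affine slice (yielding $(d-1)+1=d$ terms), whereas you apply the conic form directly in the full $d$-dimensional space; the resulting bounds are identical.
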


\begin{proof}
    As already mentioned earlier, the inequality $\mathrm{rank}(A) \leq \mathrm{fran}_k(A)$ follows straight from the relevant definitions.

    The other inequality comes from Carath\'{e}odory's theorem \cite[Theorem~1.9]{Wat18}, which says that every member of a convex set $\mathcal{C}$ living inside of a $d$-dimensional real affine space $\mathcal{V}$ can be written as a convex combination of $d+1$ of the set's extreme points. For our purposes, let's consider the convex set and real affine space
    \begin{align*}
        \mathcal{C} & := \big\{ A \in \mathcal{M}_n : \tr(A) = 1 \ \text{for all $j$, $A$ has factor width at most $k$} \big\}, \\
        \mathcal{V} & := \big\{ A \in \mathcal{M}_n : \tr(A) = 1 \ \text{for all $j$,} \ A^* = A \big\},
    \end{align*}
    respectively. If $\mathbb{F} = \R$ then $\mathrm{dim}(\mathcal{V}) = n(n+1)/2-1$, and if $\mathbb{F} = \C$ then $\mathrm{dim}(\mathcal{V}) = n^2-1$ (we emphasize that, even if $\mathbb{F} = \C$, $\mathcal{V}$ is indeed a real affine space with \emph{real} dimension $n^2-1$). Carath\'{e}odory's theorem thus tells us that every factor-width-$k$ matrix $A$ with trace equal to $1$ has $\mathrm{fran}_k(A) \leq \mathrm{dim}(\mathcal{V}) + 1$, thus attaining the bound~\eqref{eq:caratheodory_frank_bound} in this case. To see that the same is true even if $\tr(A) \neq 1$ we can simply multiply $A$ by an appropriate scalar.
\end{proof}

Since $\rank(A) \leq n$ for all $A \in \mathcal{M}_n^{+}$, there is quite a gap between the bounds provided by Proposition~\ref{prop:fac_wid_rank_basic_bounds}. Nevertheless, the results presented in the remainder of this section will show that neither bound can be improved significantly in general.

For example, the inequality $\mathrm{rank}(A) \leq \mathrm{fran}_k(A)$ is actually an equality whenever $k = 1$: for a diagonal positive semidefinite matrix $A$, its rank and factor-width-$1$ rank are both equal to its number of non-zero diagonal entries. Similarly, we saw in Theorem~\ref{thm:tridiag_fac_two_rank} that $\mathrm{fran}_k(A) = \mathrm{rank}(A)$ whenever $A$ has bandwidth less than or equal to $k$.

At the other extreme, when $k \geq 2$ there are also cases where $\mathrm{fran}_k(A)$ is close to the upper bound provided by Proposition~\ref{prop:fac_wid_rank_basic_bounds} (we already stated this fact for $k = 2$ in Lemma~\ref{lem:fac_wid_two_nonzero_lb}, but we now prove it for all $k$):

\begin{proposition}\label{prop:fac_wid_two_nonzero}
    Suppose $A \in \mathcal{M}_n^{+}$ has factor width $k \geq 2$. Then
    \[
        \mathrm{fran}_k(A) \geq \left(\frac{2}{k(k-1)}\right)\mathrm{nnzu}(A).
    \]
\end{proposition}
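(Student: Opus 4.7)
The plan is to count the non-zero upper-triangular positions produced by a minimal factor-width-$k$ decomposition and pigeonhole. Let $r = \mathrm{fran}_k(A)$ and fix a minimal decomposition
\[
A = \sum_{\ell=1}^{r} \mathbf{v}_\ell \mathbf{v}_\ell^*,
\]
where each $\mathbf{v}_\ell$ has at most $k$ non-zero entries. For each $\ell$, let $S_\ell \subseteq \{1,2,\ldots,n\}$ denote the support of $\mathbf{v}_\ell$, so $|S_\ell| \leq k$. The only positions $(i,j)$ with $i < j$ at which $\mathbf{v}_\ell \mathbf{v}_\ell^*$ can possibly be non-zero are those with $i,j \in S_\ell$, and there are at most $\binom{|S_\ell|}{2} \leq \binom{k}{2} = k(k-1)/2$ such positions.

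Next I would use the fact that a non-zero entry of $A$ above the diagonal cannot be produced out of thin air: if $a_{i,j} \neq 0$ for some $i < j$, then at least one of the summands $\mathbf{v}_\ell \mathbf{v}_\ell^*$ must have a non-zero $(i,j)$-entry (otherwise $a_{i,j}$ would be the sum of zeros). Thus the set of upper-triangular positions at which $A$ is non-zero is contained in the union over $\ell$ of the analogous sets for $\mathbf{v}_\ell \mathbf{v}_\ell^*$.

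Combining these two observations gives the chain
\[
\mathrm{nnzu}(A) \;\leq\; \sum_{\ell=1}^{r} \mathrm{nnzu}(\mathbf{v}_\ell \mathbf{v}_\ell^*) \;\leq\; r \cdot \frac{k(k-1)}{2},
\]
and rearranging yields $\mathrm{fran}_k(A) = r \geq \tfrac{2}{k(k-1)} \mathrm{nnzu}(A)$, as desired.

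There is essentially no obstacle here beyond carefully noting that cancellations in the sum can only create zeros from non-zero contributions, never non-zeros from nothing, so the support of $A$ above the diagonal is covered by the supports of the individual rank-one summands. The rest is the trivial counting bound $\binom{k}{2}$ on the number of strictly-upper-triangular positions inside a $k$-element support.
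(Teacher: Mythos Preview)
Your proof is correct and follows essentially the same counting argument as the paper: each rank-one term contributes at most $\binom{k}{2}$ non-zero upper-triangular entries, so $\mathrm{nnzu}(A) \leq r\cdot k(k-1)/2$ and rearranging gives the bound. Your version is simply more explicit about why the support of $A$ is covered by the supports of the summands.
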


\begin{proof}
    In any decomposition of the form
    \[
        A = \sum_{j=1}^r \mathbf{v}_j\mathbf{v}_j^*,
    \]
    where each $\mathbf{v}_j$ has at most $k$ non-zero entries, each term $\mathbf{v}_j\mathbf{v}_j^*$ has at most $k(k-1)/2$ non-zero entries above its main diagonal. It follows that $\mathrm{nnzu}(A) \leq rk(k-1)/2$. Rearranging gives the desired inequality.
\end{proof}

If $A \in \mathcal{M}_n^{+}$ has factor width $k$ and all of its entries non-zero then the above proposition shows that $\mathrm{fran}_k(A) \geq \frac{n(n-1)}{k(k-1)}$. In particular, for any fixed $k \geq 2$, this shows that the upper bound of Proposition~\ref{prop:fac_wid_rank_basic_bounds} is asymptotically tight: no sub-quadratic (in $n$) upper bound on $\mathrm{fran}_k(A)$ is possible. If we restrict our attention further to the $k = 2$ case, then this tells us that every factor-width-$2$ matrix $A \in \mathcal{M}_n^{+}$ with no entries equal to $0$ has
\begin{align}\label{ineq:fran2}
    \mathrm{fran}_2(A) \geq \frac{n(n-1)}{2}.
\end{align}
If $n = 2$ then this inequality might be strict, since there clearly exist $2 \times 2$ matrices with $\mathrm{fran}_2(A) = \mathrm{rank}(A) = 2 > 1$. On the other hand, in Corollary~\ref{cor:fac_wid_rank2_nonzero} we showed that if $n \geq 3$ then Inequality~\eqref{ineq:fran2} is actually an equality.

\subsection{Better Lower Bounds via Coverings}\label{sec:covering_design_bounds}

When all entries of a matrix are non-zero and $k = 2$, Corollary~\ref{cor:fac_wid_rank2_nonzero} shows that the inequality $\mathrm{fran}_k(A) \geq \frac{n(n-1)}{k(k-1)}$ of Proposition~\ref{prop:fac_wid_two_nonzero} holds with equality. However, when $k \geq 3$ this is no longer the case: stronger lower bounds are possible. To develop these stronger lower bounds, we recall a combinatorial object that will be of use to us \cite{ER56,EH63}:

\begin{definition}\label{defn:covering_design}
    Let $t \leq k \leq n$ be positive integers. An \emph{$(n,k,t)$-covering design} is a collection of $k$-element subsets of $[n]$ with the property that any $t$-element subset of $[n]$ is contained in at least one of the $k$-element subsets. The minimum number of $k$-element subsets possible in an $(n,k,t)$-covering design is denoted by $C(n,k,t)$.
\end{definition}

Our interest in covering designs comes from the following proposition, which shows that they bound the factor-width-$k$ rank of a matrix whose entries are all non-zero:

\begin{proposition}\label{prop:fac_wid_covering_design}
    Suppose $A \in \mathcal{M}_n^{+}$ has factor width $k \geq 2$ and all of its entries non-zero. Then
    \begin{align}\label{eq:frank_bigger_cnk}
        \mathrm{fran}_k(A) \geq C(n,k,2).
    \end{align}
    Furthermore, for all $n$ and $k$ there exists such a matrix for which Inequality~\eqref{eq:frank_bigger_cnk} holds with equality.
\end{proposition}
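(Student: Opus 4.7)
The plan is to translate between factor-width decompositions and $(n,k,2)$-covering designs, getting each direction of the proposition from one direction of this dictionary. For the lower bound, suppose
\[
    A = \sum_{j=1}^r \mathbf{v}_j\mathbf{v}_j^*
\]
realises $r = \mathrm{fran}_k(A)$, with each $\mathbf{v}_j$ having at most $k$ non-zero entries, and let $S_j \subseteq [n]$ denote the support of $\mathbf{v}_j$. For any pair of distinct indices $i \neq i'$, the entry $a_{i,i'}$ is the sum of $[\mathbf{v}_\ell]_i\overline{[\mathbf{v}_\ell]_{i'}}$ over those $\ell$ with $i, i' \in S_\ell$. Since $a_{i,i'} \neq 0$ by hypothesis, at least one such $\ell$ must exist, so $\{S_1,\ldots,S_r\}$ covers every $2$-subset of $[n]$. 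Padding each $S_\ell$ arbitrarily to a $k$-element subset (we may assume $k \leq n$, since otherwise $k$ exceeds the factor width of every $n \times n$ matrix) yields an $(n,k,2)$-covering of size $r$, so $\mathrm{fran}_k(A) = r \geq C(n,k,2)$.

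For the equality case, let $\{T_1,\ldots,T_m\}$ be an optimal $(n,k,2)$-covering design, so $m = C(n,k,2)$. Let $\mathbf{u}_\ell \in \R^n$ denote the indicator vector of $T_\ell$ and define
\[
    A := \sum_{\ell=1}^m \mathbf{u}_\ell\mathbf{u}_\ell^*.
\]
Then $A$ is positive semidefinite with factor width at most $k$, and this decomposition immediately witnesses $\mathrm{fran}_k(A) \leq m$. To apply the lower bound just proved, I need $A$ to have all entries non-zero: for any $i, i' \in [n]$ one has $a_{i,i'} = |\{\ell : i, i' \in T_\ell\}|$, which is at least $1$ when $i \neq i'$ (since the pair $\{i,i'\}$ is covered) and at least $1$ when $i = i'$ (since any pair $\{i,j\}$ with $j \neq i$ forces $i$ into some $T_\ell$). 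Thus $A$ satisfies the hypotheses of the lower bound, which forces $\mathrm{fran}_k(A) \geq m$ and yields equality.

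There is no serious obstacle here; the entire argument is a clean dictionary between the supports of the factor-width vectors and the blocks of a covering design, and the equality-case construction just reads this dictionary in reverse. As sanity checks: the $k = 2$ case gives $C(n,2,2) = \binom{n}{2} = \mathrm{nnzu}(A)$, recovering Corollary~\ref{cor:fac_wid_rank2_nonzero}, while the $k = n$ case collapses to the trivial bound $\mathrm{fran}_n(A) \geq 1$.
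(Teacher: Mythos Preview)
Your proof is correct and follows essentially the same approach as the paper's: both directions use the dictionary between supports of the $\mathbf{v}_j$ and blocks of an $(n,k,2)$-covering design, with the equality construction given by indicator vectors of an optimal covering. You are slightly more careful than the paper in padding the supports $S_\ell$ up to size exactly $k$ and in explicitly verifying that the constructed matrix has all entries non-zero, but these are minor technical details rather than a different argument.
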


\begin{proof}
    Let 
    \[
        A = \sum_{\ell=1}^r \mathbf{v}_{\ell}\mathbf{v}_{\ell}^*
    \]
    be a factor-width-$k$ decomposition of $A$. For each $1 \leq \ell \leq r$ let $S_\ell \subseteq [n]$ be the set of indices corresponding to non-zero entries of $\mathbf{v}_{\ell}$. Note that $|S_\ell| \leq k$ for all $\ell$. Since each entry of $A$ is non-zero we know that, for every $i \neq j$, there exists some $\ell$ for which $\{i,j\} \subseteq S_\ell$. In other words, $\{S_1,S_2,\ldots,S_r\}$ is an $(n,k,2)$-covering design, so $r \geq C(n,k,2)$.

    To see that the ``furthermore'' statement holds, suppose $\{S_1,S_2,\ldots,S_r\}$ is an $(n,k,2)$-covering design with $r = C(n,k,2)$. For each $1 \leq \ell \leq r$, let $\mathbf{v}_\ell \in \mathbb{R}^n$ be the vector with $i$-th entry equal to $1$ if $i \in S_\ell$ and $0$ if $i \notin S_\ell$. Then the matrix
    \[
        A = \sum_{\ell=1}^r \mathbf{v}_{\ell}\mathbf{v}_{\ell}^*
    \]
    has factor width at most $k$ and $\mathrm{fran}_k(A) = C(n,k,2)$.
\end{proof}

Numerous results and bounds are known about the quantity $C(n,k,2)$---see \cite{GS07} and the references therein---but its computation in general seems to be difficult. Nevertheless, the bound $C(n,k,2) \geq \lceil (n/k) \lceil (n-1)/(k-1) \rceil \rceil$ of Sch\"{o}nheim \cite{Sch64} is quite close to the best possible (for example, it is an equality when $k \leq 3$ \cite{FH58} or $k = 4$, $n \notin \{7,9,10,19\}$ \cite{Mil72,Mil73}, and for all $k$ it is known to \emph{almost} be attained as long as $n$ is large enough \cite{Gor96}), which leads immediately to the following corollary:

\begin{corollary}\label{cor:fac_wid_covering_design}
    Suppose $A \in \mathcal{M}_n^{+}$ has factor width $k \geq 2$ and all of its entries non-zero. Then
    \[
        \mathrm{fran}_k(A) \geq \left\lceil \frac{n}{k}\left\lceil \frac{n-1}{k-1} \right\rceil \right\rceil.
    \]
\end{corollary}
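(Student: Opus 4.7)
The plan is to simply chain together Proposition~\ref{prop:fac_wid_covering_design} with Sch\"{o}nheim's lower bound on $C(n,k,2)$, so the corollary is almost immediate once the right ingredients are in place. The main step is to invoke Proposition~\ref{prop:fac_wid_covering_design}, which says $\mathrm{fran}_k(A) \geq C(n,k,2)$ whenever $A \in \mathcal{M}_n^{+}$ has factor width $k \geq 2$ and all non-zero entries, and then to quote the Sch\"{o}nheim bound $C(n,k,2) \geq \lceil (n/k)\lceil (n-1)/(k-1)\rceil\rceil$ which is cited just before the corollary.

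Since the proof is genuinely a one-liner, I would also include, for the reader's benefit, a brief sketch of where Sch\"{o}nheim's bound comes from, because the argument is a clean double-counting that makes the final inequality look natural. First I would fix any element $x \in [n]$ in an $(n,k,2)$-covering design $\{S_1,\ldots,S_r\}$: each $k$-subset $S_\ell$ containing $x$ covers exactly $k-1$ of the pairs $\{x,y\}$, and since all $n-1$ such pairs must be covered, at least $\lceil (n-1)/(k-1)\rceil$ of the $S_\ell$ contain $x$. Summing this lower bound over all $n$ choices of $x$ double-counts the incidences $(x,S_\ell)$ with $x \in S_\ell$; each $S_\ell$ contributes at most $k$ such incidences, which gives $rk \geq n\lceil (n-1)/(k-1)\rceil$. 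Dividing by $k$ and taking the ceiling (since $r$ is an integer) yields the stated bound on $C(n,k,2)$.

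There is no real obstacle here; the only small care needed is to make sure the hypothesis of Proposition~\ref{prop:fac_wid_covering_design} (factor width exactly $k \geq 2$ and all entries non-zero) is directly inherited by the corollary's hypothesis, which it is. So the final proof would just read: by Proposition~\ref{prop:fac_wid_covering_design}, $\mathrm{fran}_k(A) \geq C(n,k,2)$, and combining this with the Sch\"{o}nheim bound gives
\[
    \mathrm{fran}_k(A) \geq C(n,k,2) \geq \left\lceil \frac{n}{k}\left\lceil \frac{n-1}{k-1}\right\rceil\right\rceil,
\]
as required.
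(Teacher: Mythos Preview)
Your proposal is correct and matches the paper's approach exactly: the paper states the corollary as an immediate consequence of Proposition~\ref{prop:fac_wid_covering_design} together with Sch\"{o}nheim's bound $C(n,k,2) \geq \lceil (n/k)\lceil (n-1)/(k-1)\rceil\rceil$, without even writing out a formal proof. Your inclusion of the double-counting sketch for Sch\"{o}nheim's inequality is a helpful addition beyond what the paper provides.
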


When the matrix $A \in \M_n^{+}$ has some entries equal to $0$, factor width ranks smaller than $C(n,k,2)$ are possible. However, there is another previously-studied quantity related to coverings that provides a lower bound on the factor width rank in this case \cite{GHH96}:

\begin{definition}\label{defn:edge_coverings}
    Let $k$ be a positive integer. A \emph{$k$-clique covering} of a graph $G$ is a collection of cliques of size $k$ (i.e., \emph{$k$-cliques}) for which every edge of $G$ is contained in at least one of the $k$-cliques. The minimum number of $k$-cliques possible in a $k$-clique covering of $G$ is denoted by $\textup{cc}_k(G)$.
\end{definition}

We note that the $k$-cliques used in a $k$-clique covering of a graph do not need to be contained within the graph $G$: they are allowed to have edges that $G$ does not. For this reason, there is no obvious relationship between the quantity $\textup{cc}_k(G)$ and the usual clique cover number $\textup{cc}(G)$ (which considers cliques of any size, but those cliques must be contained within $G$).

Our interest in $k$-clique coverings comes from the following proposition, which shows that they bound the factor-width-$k$ rank of a matrix:

\begin{proposition}\label{prop:kclique_covering_bound}
    Suppose $A \in \mathcal{M}_n^{+}$ has factor width $k \geq 2$. Let $G$ be the graph on $n$ vertices that contains $\{i,j\}$ as an edge if and only if $a_{i,j} \neq 0$. Then
    \begin{align}\label{eq:frank_bigger_cck}
        \mathrm{fran}_k(A) \geq \textup{cc}_k(G).
    \end{align}
\end{proposition}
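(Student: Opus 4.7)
The plan is to mirror the argument already carried out for Proposition~\ref{prop:fac_wid_covering_design}, replacing the role of an $(n,k,2)$-covering design with that of a $k$-clique covering of $G$. Specifically, I would take a minimum factor-width-$k$ decomposition
\[
    A = \sum_{\ell=1}^{r} \mathbf{v}_{\ell}\mathbf{v}_{\ell}^*,
\]
where $r = \mathrm{fran}_k(A)$ and each $\mathbf{v}_\ell$ has at most $k$ non-zero entries, and then extract a $k$-clique covering of $G$ from the supports of the vectors $\mathbf{v}_{\ell}$.

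For each $\ell$, let $S_\ell \subseteq [n]$ be the set of indices at which $\mathbf{v}_\ell$ is non-zero, so $|S_\ell| \leq k$. Since the definition of a $k$-clique covering calls for cliques of size exactly $k$, I would pad each $S_\ell$ arbitrarily to a set $\widetilde{S}_\ell$ of cardinality exactly $k$, and let $K_\ell$ denote the $k$-clique on the vertex set $\widetilde{S}_\ell$ (note that we are free to include edges that do not appear in $G$, as remarked after Definition~\ref{defn:edge_coverings}).

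To verify that $\{K_1,\ldots,K_r\}$ is a $k$-clique covering of $G$, fix an edge $\{i,j\}$ of $G$, i.e., indices $i \neq j$ with $a_{i,j} \neq 0$. Since
\[
    a_{i,j} = \sum_{\ell=1}^{r} [\mathbf{v}_\ell \mathbf{v}_\ell^*]_{i,j},
\]
at least one summand $[\mathbf{v}_\ell \mathbf{v}_\ell^*]_{i,j} = [\mathbf{v}_\ell]_i \overline{[\mathbf{v}_\ell]_j}$ must be non-zero, which forces $i,j \in S_\ell \subseteq \widetilde{S}_\ell$; hence the edge $\{i,j\}$ lies in $K_\ell$. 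This produces a $k$-clique covering of $G$ using exactly $r$ cliques, so $\textup{cc}_k(G) \leq r = \mathrm{fran}_k(A)$, as required.

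There is no real obstacle here: the only subtlety is the trivial padding step needed to make each $S_\ell$ have cardinality exactly $k$, and the observation that Definition~\ref{defn:edge_coverings} permits $k$-cliques whose edges are not present in $G$. Everything else is a direct translation of the non-zero-entry argument in Proposition~\ref{prop:fac_wid_covering_design}, specialized so that ``pair $\{i,j\}$'' is replaced by ``edge $\{i,j\} \in E(G)$.''
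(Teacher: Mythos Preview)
Your proposal is correct and is essentially identical to the paper's own proof: both take a factor-width-$k$ decomposition, pad the support of each $\mathbf{v}_\ell$ to a $k$-clique, and observe that every edge of $G$ must be covered by one of these cliques. The only cosmetic difference is that you explicitly start from a minimal decomposition with $r=\mathrm{fran}_k(A)$, while the paper shows $r\geq\textup{cc}_k(G)$ for an arbitrary decomposition; both yield the same conclusion.
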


\begin{proof}
    Let 
    \[
        A = \sum_{\ell=1}^r \mathbf{v}_{\ell}\mathbf{v}_{\ell}^*
    \]
    be a factor-width-$k$ decomposition of $A$. For each $1 \leq \ell \leq r$ let $G_\ell \subseteq [n]$ be a clique on the vertices corresponding to non-zero entries of $\mathbf{v}_{\ell}$ (if $\mathbf{v}_{\ell}$ has strictly fewer than $k$ non-zero entries then simply add extra vertices and edges to $G_\ell$ arbitrarily until it is a $k$-clique). Whenever $a_{i,j} \neq 0$ there exists some $\ell$ such that the $i$-th and $j$-th entries of $\mathbf{v}_{\ell}$ are both non-zero. In other words, the edge $\{i,j\}$ of $G$ is covered by the $k$-clique $G_\ell$, so $r \geq \textup{cc}_k(G)$.
\end{proof}

We close this subsection by noting that Proposition~\ref{prop:kclique_covering_bound} is a direct generalization of Proposition~\ref{prop:fac_wid_covering_design}: if all of the entries of $A$ are non-zero then its associated graph $G$ is the complete graph $K_n$, and it is straightforward to show that $\textup{cc}_k(K_n) = C(n,k,2)$.

\begin{example}\label{exam:cube_graph}
    Consider the cube graph $Q_3$ on $8$ vertices, illustrated in Figure~\ref{fig:cube_graph}. We consider the factor-width-3 rank $\mathrm{fran}_3(A)$ of a matrix that has zero pattern defined by this cube graph (i.e., $a_{i,j} \neq 0$ exactly when $(i,j)$ is an edge of the $Q_3$).

    \begin{figure}[htb]
        \centering
        \begin{tikzpicture}[scale=2.25]
        \newcounter{vertnum};
        \setcounter{vertnum}{1};
        \foreach \x in {0,1}
            \foreach \y in {0,1}
                \foreach \z in {0,1}{
                    \node[draw, circle, fill=white, draw=black, inner sep=1.5pt] (v\x\y\z) at (\x,\y,\z) {\arabic{vertnum}};
                    \addtocounter{vertnum}{1};
                }
        
        \foreach \x in {0,1}
            \foreach \y in {0,1}
                \draw (v\x\y0) -- (v\x\y1);
        
        \foreach \x in {0,1}
            \foreach \z in {0,1}
                \draw (v\x0\z) -- (v\x1\z);
        
        \foreach \y in {0,1}
            \foreach \z in {0,1}
                \draw (v0\y\z) -- (v1\y\z);
        
        \draw (v000) -- (v001) -- (v011) -- (v010) -- cycle;
        \draw (v100) -- (v101) -- (v111) -- (v110) -- cycle;
        
        \foreach \y in {0,1}
            \foreach \z in {0,1}
                \draw (v0\y\z) -- (v1\y\z);
    \end{tikzpicture}
    \caption{The cube graph $Q_3$.}\label{fig:cube_graph}
    \end{figure}
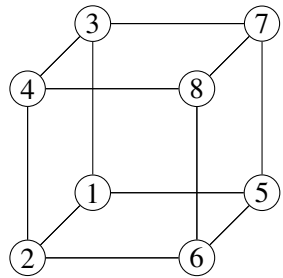

    We claim that $\textup{cc}_3(Q_3) = 6$. To verify this claim, first note that $\textup{cc}_3(Q_3) \geq 6$ since $Q_3$ has no triangles (i.e., $3$-cliques), so each triangle in a $3$-clique covering of $Q_3$ contains at most $2$ of its $12$ edges. On the other hand, we can see that $\textup{cc}_3(Q_3) \leq 6$ by explicitly constructing a $3$-clique covering that makes use of the following 6 triangles (where we simply label the $3$ vertices in each triangle, using the numbering from Figure~\ref{fig:cube_graph}), in the obvious way: 1--2--3, 1--5--7, 2--4--6, 3--4--8, 3--7--8, and 5--6--8.

    Proposition~\ref{prop:kclique_covering_bound} then tells us that every factor-width-3 matrix with non-zero pattern determined by $Q_3$ has factor-width-3 rank at least 6. This is an improvement over the general lower bound of $4$ for the (not-necessarily-factor-width-3) rank of a matrix with non-zero pattern determined by $Q_3$ \cite[Theorem~3.1]{aim2008zero}.
\end{example}

\subsection{Factor Width Rank of Small Matrices}\label{sec:small_mats}

We now summarize exactly what we do and don't know about the factor-width-$k$ rank of small matrices. To start, we note that the case of a $2 \times 2$ matrix $A$ is trivial since $\mathrm{fran}_k(A) = \mathrm{rank}(A)$ when $k \in \{1,n\}$.

For a $3 \times 3$ matrix $A$, the situation is slightly less straightforward, but we can similarly compute factor width rank as follows. We first determine the factor width $k$ via semidefinite programming. If $k \in \{1,3\}$ then $\mathrm{fran}_k(A) = \mathrm{rank}(A)$. On the other hand, if the factor width is $k = 2$ then we split into two cases:
\begin{itemize}
    \item If all off-diagonal entries of $A$ are non-zero then, by Corollary~\ref{cor:fac_wid_rank2_nonzero}, we have $\mathrm{fran}_2(A) = 3$.

    \item If at least one off-diagonal entry of $A$ equals zero then there exists a permutation matrix $P$ for which $PAP^*$ is tridiagonal. It follows from Theorem~\ref{thm:tridiag_fac_two_rank} that
    \[
        \mathrm{fran}_2(A) = \mathrm{fran}_2(PAP^*) = \mathrm{rank}(PAP^*) = \mathrm{rank}(A).
    \]
\end{itemize}

For a $4 \times 4$ matrix, the situation is more complicated. We can again compute the factor width $k$ via semidefinite programming, and note that if $k \in \{1,4\}$ then $\mathrm{fran}_k(A) = \mathrm{rank}(A)$. If $k \in \{2,3\}$ then we split into cases as follows:

\begin{itemize}
    \item If $k = 3$ and at least one off-diagonal entry of $A$ equals zero then there exists a permutation matrix $P$ for which $PAP^*$ has bandwidth $3$, so it follows from Theorem~\ref{thm:tridiag_fac_two_rank} that $\mathrm{fran}_3(A) = \mathrm{rank}(A)$. On the other hand, if all off-diagonal entries of $A$ are non-zero then we do not know of an effective way to compute $\mathrm{fran}_3(A)$; the best bounds we have are
    \[
        3 \leq \mathrm{fran}_3^{\mathbb{F}}(A) \leq \begin{cases}
                10 & \text{if \ $\mathbb{F} = \R$}, \\
                12 & \text{if \ $\mathbb{F} = \C$}.
            \end{cases}
    \]
    by Corollary~\ref{cor:fac_wid_covering_design} and Propositions~\ref{prop:two_rank_ubs} and~\ref{prop:fac_wid_rank_basic_bounds}.
    
    \item If $k = 2$ and all off-diagonal entries of $A$ are non-zero then, by Corollary~\ref{cor:fac_wid_rank2_nonzero}, we have $\mathrm{fran}_2(A) = 6$. If there exists a permutation matrix $P$ for which $PAP^*$ is tridiagonal then Theorem~\ref{thm:tridiag_fac_two_rank} tells us that $\mathrm{fran}_2(A) = \mathrm{rank}(A)$. There are, up to conjugation by permutation matrices, exactly $5$ other patterns that the non-zero entries of $A$ can have:
    \begin{align*}
        & \begin{bmatrix}
            * & * & * & 0 \\
            * & * & * & * \\
            * & * & * & * \\
            0 & * & * & *
        \end{bmatrix}, \quad \begin{bmatrix}
            * & * & 0 & 0 \\
            * & * & * & * \\
            0 & * & * & * \\
            0 & * & * & *
        \end{bmatrix}, \quad \begin{bmatrix}
            * & * & 0 & * \\
            * & * & * & 0 \\
            0 & * & * & * \\
            * & 0 & * & *
        \end{bmatrix}, \quad \begin{bmatrix}
            * & * & * & * \\
            * & * & 0 & 0 \\
            * & 0 & * & 0 \\
            * & 0 & 0 & *
        \end{bmatrix}, \quad \begin{bmatrix}
            * & 0 & 0 & 0 \\
            0 & * & * & * \\
            0 & * & * & * \\
            0 & * & * & *
        \end{bmatrix}.
    \end{align*}
    For the second matrix above, we apply Theorem~\ref{thm:block_diag} to see that $\mathrm{fran}_2(A) = 4$. The fourth matrix is an arrowhead matrix, so Theorem~\ref{thm:arrowhead_facwid2} tells us that $\mathrm{fran}_2(A) = \rank(A)$. The fifth matrix is block diagonal, so $\mathrm{fran}_2(A)$ is equal to the sum of the factor-width-$2$ ranks of its diagonal blocks. We do not know of an effective way to compute $\mathrm{fran}_2(A)$ when $A$ has the zero pattern of the first (pentadiagonal) or third (cyclic tridiagonal) matrix.
\end{itemize}

\section{How Hadamard Products Affect Factor Width}\label{sec:hadamard_product}

We now investigate how the Hadamard (i.e., entrywise) product affects the factor width and factor width rank of matrices. If $A,B \in \M_n$ then we denote their Hadamard product by $A \odot B$. That is, $A \odot B \in \M_n$ is the matrix whose $(i,j)$-entry is $a_{i,j}b_{i,j}$. If $s \geq 1$ (which is not necessarily an integer) then we furthermore let $A^{\odot s}$ denote the $s$-th Hadamard power of $A$, which has $(i,j)$-entry equal to $a_{i,j}^s$.

The well-known Schur product theorem \cite{Sch11} says that if $A$ and $B$ are positive semidefinite then so is $A \odot B$. Furthermore, it is straightforward to show that if $A$ and $B$ have rank~$1$ then so does $A \odot B$. We know show that analogous results hold for factor width and factor width rank:

\begin{theorem}\label{thm:hadamard_different_matrices}
    Suppose $A,B \in \M_n^{+}$ have factor width at most $k$. Then $A \odot B$ has factor width at most $k$. Furthermore, $\mathrm{fran}_k(A \odot B) \leq \mathrm{fran}_k(A)\mathrm{fran}_k(B)$.
\end{theorem}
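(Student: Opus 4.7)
The plan is to reduce the theorem to the basic fact that the Hadamard product of two rank-$1$ positive semidefinite matrices is again a rank-$1$ positive semidefinite matrix, combined with the simple observation that the Hadamard product of two vectors has support contained in the intersection of their supports. The bound on factor-width-$k$ rank, meanwhile, will fall out of bilinearity of $\odot$ once we have an explicit decomposition for each factor.

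Concretely, I would start by fixing minimal factor-width-$k$ decompositions
\[
    A = \sum_{i=1}^{r} \mathbf{v}_i \mathbf{v}_i^*, \qquad B = \sum_{j=1}^{s} \mathbf{w}_j \mathbf{w}_j^*,
\]
where $r = \mathrm{fran}_k(A)$ and $s = \mathrm{fran}_k(B)$, with each $\mathbf{v}_i$ and $\mathbf{w}_j$ having at most $k$ non-zero entries. Using bilinearity of the Hadamard product, I would then write
\[
    A \odot B = \sum_{i=1}^{r} \sum_{j=1}^{s} (\mathbf{v}_i \mathbf{v}_i^*) \odot (\mathbf{w}_j \mathbf{w}_j^*).
\]
The key identity is $(\mathbf{v}_i \mathbf{v}_i^*) \odot (\mathbf{w}_j \mathbf{w}_j^*) = (\mathbf{v}_i \odot \mathbf{w}_j)(\mathbf{v}_i \odot \mathbf{w}_j)^*$, which is a direct entrywise calculation: the $(p,q)$-entry on either side equals $v_{i,p}\overline{v_{i,q}}\,w_{j,p}\overline{w_{j,q}}$.

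Next I would check the support condition. Since the $p$-th entry of $\mathbf{v}_i \odot \mathbf{w}_j$ is $v_{i,p} w_{j,p}$, the support of $\mathbf{v}_i \odot \mathbf{w}_j$ is contained in the intersection of the supports of $\mathbf{v}_i$ and $\mathbf{w}_j$, and hence has cardinality at most $k$. This simultaneously establishes that $A \odot B$ has factor width at most $k$ and that it admits a factor-width-$k$ decomposition with $rs$ terms, giving $\mathrm{fran}_k(A \odot B) \leq rs = \mathrm{fran}_k(A)\,\mathrm{fran}_k(B)$.

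There is no real obstacle here; the only thing to be a bit careful about is making sure the argument doesn't accidentally need both matrices to have the same factor-width-$k$ decomposition or to share a support pattern, which it does not. Everything hinges on the clean factoring $(\mathbf{v}\mathbf{v}^*) \odot (\mathbf{w}\mathbf{w}^*) = (\mathbf{v} \odot \mathbf{w})(\mathbf{v} \odot \mathbf{w})^*$ and the fact that intersection of supports shrinks (rather than grows) the support size.
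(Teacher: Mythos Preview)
Your proof is correct and follows essentially the same approach as the paper: expand both matrices into their minimal factor-width-$k$ decompositions, distribute the Hadamard product bilinearly, and observe that each cross term is a rank-$1$ PSD matrix supported on at most $k$ coordinates. Your write-up is in fact slightly more explicit than the paper's, spelling out the identity $(\mathbf{v}\mathbf{v}^*)\odot(\mathbf{w}\mathbf{w}^*)=(\mathbf{v}\odot\mathbf{w})(\mathbf{v}\odot\mathbf{w})^*$ and the support-intersection observation that the paper leaves implicit.
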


\begin{proof}
    Write $A=\sum_{i=1}^\ell A_i$ and $B=\sum_{j=1}^m B_j$ with $A_i$ and $B_j$ all PSD rank-1 matrices each having a principal $k\times k$ nonzero block (or smaller) and $\ell = \mathrm{fran}_k(A)$ and $m = \mathrm{fran}_k(B)$. Then $$A\odot B=\sum_{i=1}^{\ell}\sum_{j=1}^m A_i\odot B_j.$$
    Then each $A_i\odot B_j$ term is also PSD with rank~$1$ and is still non-zero on at most a $k \times k$ principal submatrix, so their sum has factor width at most $k$. Since there are $\ell m$ terms in this sum, we furthermore have $\mathrm{fran}_k(A \odot B) \leq \ell m = \mathrm{fran}_k(A)\mathrm{fran}_k(B)$.
\end{proof}

We note that the bound $\mathrm{fran}_k(A \odot B) \leq \mathrm{fran}_k(A)\mathrm{fran}_k(B)$ from the above theorem cannot be improved without taking $k$, $n$, or additional features of $A$ or $B$ into account. In particular, if $A$ and $B$ are random PSD matrices (chosen according to some continuous distribution) with only their top-left $k \times k$ block non-zero, then typically we will have
\[
    \mathrm{fran}_k(A \odot B) = \rank(A \odot B) = \rank(A)\rank(B) = \min\{k,\mathrm{fran}_k(A)\mathrm{fran}_k(B)\}.
\]
For example, if
\[
    A = \begin{bmatrix}
     2 & 1 & 2 & 1 & 0 \\
     1 & 1 & 1 & 1 & 0 \\
     2 & 1 & 2 & 1 & 0 \\
     1 & 1 & 1 & 1 & 0 \\
     0 & 0 & 0 & 0 & 0
    \end{bmatrix} \quad \text{and} \quad B = \begin{bmatrix}
     2 & 0 & 1 & -1 & 0 \\
     0 & 2 & 1 & 1 & 0 \\
     1 & 1 & 1 & 0 & 0 \\
    -1 & 1 & 0 & 1 & 0 \\
     0 & 0 & 0 & 0 & 0
    \end{bmatrix}
\]
then it is straightforward to verify that $\rank(A) = \rank(B) = \mathrm{fran}_4(A) = \mathrm{fran}_4(B) = 2$ and $\rank(A \odot B) = \mathrm{fran}_4(A \odot B) = 4$.

Theorem~\ref{thm:hadamard_different_matrices} can be extended straightforwardly to the Hadamard product of $s \geq 2$ matrices, though the right-hand-side of the resulting bound $\mathrm{fran}_k(A_1 \odot \cdots \odot A_s) \leq \mathrm{fran}_k(A_1)\cdots\mathrm{fran}_k(A_s)$ will quickly exceed the overall maximum possible factor width rank established by Propositions~\ref{prop:two_rank_ubs} and~\ref{prop:fac_wid_rank_basic_bounds}. When all the matrices are the same as each other, however, we get the following slightly better bound:

\begin{theorem}\label{thm:hadamard_same_matrix}
    Let $s \geq 1$ be an integer and suppose $A \in \M_n^{+}$ has factor width at most $k$. Then $A^{\odot s}$ has factor width at most $k$. Furthermore,
    \[
        \mathrm{fran}_k\big(A^{\odot s}\big) \leq \binom{\mathrm{fran}_k(A)+s-1}{s}.
    \]
\end{theorem}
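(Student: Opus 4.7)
The plan is to start from an optimal factor-width-$k$ decomposition $A = \sum_{i=1}^{r} \mathbf{v}_i \mathbf{v}_i^*$ with $r = \mathrm{fran}_k(A)$, and then expand $A^{\odot s}$ by distributing the Hadamard product over the sum. This produces
\[
    A^{\odot s} = \sum_{i_1,i_2,\ldots,i_s = 1}^{r} \big(\mathbf{v}_{i_1}\mathbf{v}_{i_1}^*\big) \odot \big(\mathbf{v}_{i_2}\mathbf{v}_{i_2}^*\big) \odot \cdots \odot \big(\mathbf{v}_{i_s}\mathbf{v}_{i_s}^*\big),
\]
which is an $r^s$-term sum. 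The first key observation is the identity $(\mathbf{v}\mathbf{v}^*) \odot (\mathbf{w}\mathbf{w}^*) = (\mathbf{v} \odot \mathbf{w})(\mathbf{v} \odot \mathbf{w})^*$, which holds entrywise and lets us rewrite each summand as $\mathbf{u}_{i_1,\ldots,i_s}\mathbf{u}_{i_1,\ldots,i_s}^*$, where $\mathbf{u}_{i_1,\ldots,i_s} \defeq \mathbf{v}_{i_1} \odot \cdots \odot \mathbf{v}_{i_s}$. Since the Hadamard product of vectors is supported on the intersection of the individual supports, and each $\mathbf{v}_{i_j}$ has at most $k$ non-zero entries, so does each $\mathbf{u}_{i_1,\ldots,i_s}$. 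This already proves that $A^{\odot s}$ has factor width at most $k$ (and gives the weaker bound $r^s$ on factor-width-$k$ rank).

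To sharpen the bound to $\binom{r+s-1}{s}$, I would exploit the commutativity of the Hadamard product: $\mathbf{u}_{i_1,\ldots,i_s}$ depends only on the multiset $\{\!\{i_1,\ldots,i_s\}\!\}$, not on the ordering. Group the $r^s$ index tuples by their underlying multiset: a multiset $M$ with multiplicities $(m_1,\ldots,m_r)$ (where $m_1 + \cdots + m_r = s$) is obtained from exactly $\binom{s}{m_1,\ldots,m_r}$ ordered tuples. Writing $\mathbf{u}_M$ for the common value of $\mathbf{u}_{i_1,\ldots,i_s}$ on this group and $c_M$ for the multinomial coefficient, we obtain
\[
    A^{\odot s} = \sum_M c_M \mathbf{u}_M \mathbf{u}_M^* = \sum_M \big(\sqrt{c_M}\,\mathbf{u}_M\big)\big(\sqrt{c_M}\,\mathbf{u}_M\big)^*,
\]
where the sum is over multisets of size $s$ drawn from $\{1,2,\ldots,r\}$. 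Each vector $\sqrt{c_M}\,\mathbf{u}_M$ still has at most $k$ non-zero entries, and the number of such multisets is the classical stars-and-bars count $\binom{r+s-1}{s}$, yielding the desired bound.

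The main subtlety I expect is a bookkeeping one rather than a conceptual obstacle: making sure the grouping-by-multiset argument is stated cleanly (in particular, that the Hadamard product is associative and commutative on vectors, so the grouping is well-defined) and that the scalar $\sqrt{c_M}$ is admissible in both the real and complex settings (which it is, since $c_M$ is a non-negative integer). Once these pieces are in place, the bound follows directly from counting multisets.
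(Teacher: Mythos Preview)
Your proposal is correct and follows essentially the same approach as the paper: expand $A^{\odot s}$ by distributing the Hadamard product over an optimal factor-width-$k$ decomposition, then group the $r^s$ terms by the underlying multiset of indices and count multisets via stars-and-bars. Your version is in fact slightly more explicit than the paper's, since you write out the rank-1 identity $(\mathbf{v}\mathbf{v}^*) \odot (\mathbf{w}\mathbf{w}^*) = (\mathbf{v} \odot \mathbf{w})(\mathbf{v} \odot \mathbf{w})^*$ and address the absorption of the multinomial coefficient into the vector.
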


\begin{proof}
    Write $A=\sum_{i=1}^\ell A_i$ with $A_i$ all PSD rank-1 matrices each having a principal $k\times k$ nonzero block (or smaller) and $\ell = \mathrm{fran}_k(A)$. Then
    \begin{align}\begin{split}\label{eq:As_multinomial}
        A^{\odot s} & = \sum_{i_1,i_2,\ldots,i_s=1}^{\ell} A_{i_1}\odot A_{i_2} \odot \cdots \odot A_{i_s} \\
        & = \sum_{1 \leq i_1 \leq i_2 \leq \cdots \leq i_s \leq \ell} \binom{s}{I_1,I_2,\ldots,I_\ell} A_{i_1}\odot A_{i_2} \odot \cdots \odot A_{i_s},
    \end{split}\end{align}
    where $I_j := |\{m : i_m = j\}|$ ($1 \leq j \leq \ell$) is the number of subscripts equal to $j$, so $\binom{s}{I_1,I_2,\ldots,I_\ell}$ is a multinomial coefficient that counts how many times, taking commutativity of the Hadamard product into account, the term $A_{i_1}\odot A_{i_2} \odot \cdots \odot A_{i_s}$ appears in the first sum of Equation~\eqref{eq:As_multinomial}.
    
    Since there are $\binom{\ell+s-1}{s}$ terms in the second sum of Equation~\eqref{eq:As_multinomial}, and each term $A_{i_1}\odot A_{i_2} \odot \cdots \odot A_{i_s}$ of that sum has rank~$1$ and is still non-zero on at most a $k \times k$ principal submatrix, the result follows.
\end{proof}

In particular, Theorem~\ref{thm:hadamard_same_matrix} shows that if $\mathrm{fran}_k(A)$ is fixed then $\mathrm{fran}_k\big(A^{\odot s}\big)$ grows at most as a polynomial in $s$, not as an exponential in $s$ like when computing Hadamard products of different matrices as in Theorem~\ref{thm:hadamard_different_matrices}.

\subsection{Non-Integer Hadamard Powers}

Theorem~\ref{thm:hadamard_same_matrix} tells us that positive integer Hadamard powers of matrices with small factor width still have small factor width. Determining whether or not the same is true of non-integer powers is much trickier. In fact, even making sense of this question is trickier, since non-integer powers of negative numbers can be complex. For this reason, we now restrict our attention to matrices with all of their entries non-negative.\footnote{While factor width can be extended to complex matrices (see Section~\ref{sec:real_v_complex}), there are additional problems that arise when considering positive semidefiniteness of non-integer Hadamard powers of matrices with negative entries; see \cite[Section~3]{FH1977}.}

The corresponding question for positive semidefiniteness (i.e., ``Which non-integer Hadamard powers preserve positive semidefiniteness?'') is answered by the Fitzgerald--Horn theorem \cite[Theorem~2.2]{FH1977}: if $A \in \M_n^+$ and $s > 0$ then we can conclude that that $A^{\odot s} \in M_n^+$ if and only if $s$ is and integer or $s \geq n-2$. We conjecture the following generalization of the Fitzgerald--Horn theorem for factor width:

\begin{conjecture}\label{conj:main_hadamard}
    Let $s > 0$ and suppose $A \in \M_n^{+}$ is entrywise non-negative with factor width at most $k$. If $s \geq \min\{k-1,n-2\}$ then $A^{\odot s}$ also has factor width at most $k$. Conversely, if $s < \min\{k-1,n-2\}$ is not an integer then there exists a factor-width-$k$ matrix $A$ for which $A^{\odot s}$ is not positive semidefinite.
\end{conjecture}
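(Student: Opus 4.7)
The conjecture has two directions: a forward direction asserting a sufficient threshold and a converse direction asserting its sharpness. My plan tackles each separately.

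For the forward direction, I would first dispose of the case $k = n$, in which factor width at most $n$ coincides with positive semidefiniteness and the statement reduces exactly to the Fitzgerald--Horn theorem. For $k \leq n - 1$ the threshold becomes $s \geq k - 1$, and I would begin by observing that every $(k+1) \times (k+1)$ principal submatrix of $A$ also has factor width at most $k$ (as a matrix in $\M_{k+1}$), which is immediate from restricting any factor-width decomposition of $A$ to those indices. Applying Fitzgerald--Horn to each such submatrix---which lives in $\M_{k+1}^+$ and so requires exactly the threshold $s \geq (k+1) - 2 = k - 1$---every $(k+1) \times (k+1)$ principal submatrix of $A^{\odot s}$ is positive semidefinite. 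The remaining and much harder task is to promote this local positive semidefiniteness of $A^{\odot s}$ to a global factor-width-at-most-$k$ decomposition.

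This local-to-global step is the main obstacle. Local positive semidefiniteness on $(k+1) \times (k+1)$ principal submatrices does \emph{not} in general imply factor width at most $k$, so the argument must make essential use of the factor-width structure of $A$ itself, not merely a local positive-semidefiniteness property of $A^{\odot s}$. One approach I would attempt is the splitting $s = (k-1) + r$ with $r \geq 0$ combined with the identity $A^{\odot s} = A^{\odot (k-1)} \odot A^{\odot r}$: Theorem~\ref{thm:hadamard_same_matrix} gives $A^{\odot (k-1)}$ factor width at most $k$, and a mild strengthening of the proof of Theorem~\ref{thm:hadamard_different_matrices} shows that $X \odot Y$ has factor width at most $k$ whenever $X$ does and $Y$ is positive semidefinite. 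The sticking point is then the positive semidefiniteness of $A^{\odot r}$, which fails for generic factor-width-at-most-$k$ matrices when $r$ is a small non-integer (e.g., the tridiagonal $3 \times 3$ example used in the converse direction below). Overcoming this will likely require either a more delicate splitting that exploits the supports of a factor-width decomposition $A = \sum_\ell \mathbf{v}_\ell \mathbf{v}_\ell^*$ directly, or a constructive argument producing a decomposition of $A^{\odot s}$ whose terms are supported on the same $S_\ell \subseteq [n]$ of size at most $k$.

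For the converse direction, set $m := \min\{k + 1, n\}$, so that $m - 2 = \min\{k - 1, n - 2\}$. Fitzgerald--Horn applied to $\M_m^+$ yields, for any non-integer $s < m - 2$, an entrywise non-negative positive semidefinite matrix $B \in \M_m^+$ whose $s$-th Hadamard power fails positive semidefiniteness. The remaining work is to show $B$ can additionally be taken to have factor width at most $k$ as a matrix in $\M_m$, after which padding by zeros yields the desired $n \times n$ counterexample. When $m = n$, so $k \in \{n-1, n\}$, the case $k = n$ is Fitzgerald--Horn directly while $k = n - 1$ requires verifying (or perturbing to achieve) that some Fitzgerald--Horn counterexample has factor width at most $n - 1$. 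When $m = k + 1$, I would use concrete low-factor-width families: for $s \in (0, 1)$ a tridiagonal matrix $\mathrm{tridiag}(1, c, 1) \in \M_{k+1}$ (which has factor width at most $2 \leq k$ by Theorem~\ref{thm:factor_width_and_bandwidth}) with $c$ tuned just below the positive-semidefiniteness threshold of $B$ but above that of $B^{\odot s}$ gives a counterexample. For $s \in [1, k-1)$ non-integer, tridiagonal examples are insufficient since Hadamard powers only shrink their off-diagonal entries, and I would instead examine the classical rank-$2$ Fitzgerald--Horn construction $B_{i,j} = 1 + x_i x_j$, which has rank at most $2$ in $\M_{k+1}$ and should typically admit a factor-width-at-most-$k$ decomposition; the main technical task is to verify this factor-width bound on the counterexample, which seems routine but case-dependent.
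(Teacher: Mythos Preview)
The statement is a \emph{conjecture}, and the paper does not prove the forward direction in general. The paper only establishes it in the special cases $k \in \{1, 2, n\}$ and for banded matrices, and explicitly leaves the general forward direction open. You correctly identify the central obstacle---that local positive semidefiniteness of $(k+1)\times(k+1)$ principal submatrices does not imply factor width at most $k$, so something beyond Fitzgerald--Horn applied blockwise is needed---and you are honest that your splitting $A^{\odot s} = A^{\odot(k-1)} \odot A^{\odot r}$ breaks down precisely because $A^{\odot r}$ need not be positive semidefinite. That is an accurate assessment of where the difficulty lies; the paper has nothing further to offer on this point.

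For the converse direction the paper's route is cleaner and avoids your case analysis entirely. It invokes \cite[Theorem~1.4]{chordalpositivity}, which already supplies, for every non-integer $s < k-1$, an entrywise non-negative positive semidefinite matrix of \emph{bandwidth} $k$ whose $s$-th Hadamard power is not positive semidefinite; by Theorem~\ref{thm:factor_width_and_bandwidth} such a matrix automatically has factor width at most $k$. Padding to size $n$ handles the $\min$ with $n-2$. This sidesteps both the need to split on the range of $s$ and the need to verify factor-width bounds on the rank-$2$ Fitzgerald--Horn family $B_{i,j} = 1 + x_i x_j$. The latter verification is not obviously routine: a generic $(k+1)\times(k+1)$ positive semidefinite matrix of rank $2$ need not have factor width at most $k$, so you would have to exhibit an explicit decomposition for the particular $x_i$'s that produce the counterexample, and it is not clear this can always be arranged simultaneously with the failure of positive semidefiniteness of $B^{\odot s}$. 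The bandwidth-based argument buys you the factor-width bound for free.
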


It is perhaps worth noting a few special cases in which Conjecture~\ref{conj:main_hadamard} is known to hold:

\begin{itemize}
    \item If $k = n$ then the conjecture is simply the Fitzgerald--Horn theorem, and is thus true.

    \item If $k = 1$ then the conjecture says that all positive Hadamard powers of diagonal positive semidefinite matrices are still diagonal and positive semidefinite, which is trivially true.

    \item If $A \in \M_n^{+}$ has bandwidth $1 \leq k < n$ then (in light of Theorem~\ref{thm:factor_width_and_bandwidth}) Conjecture~\ref{conj:main_hadamard} conjectures that $A^{\odot s}$ must still be positive semidefinite exactly when $s \geq k-1$ or $s$ is an integer. This was proved in \cite[Theorem~1.4]{chordalpositivity}. The tridiagonal (i.e., bandwidth $k = 2$) case was also proved in \cite{2022positivity}.

    \item The ``conversely'' statement of Conjecture~\ref{conj:main_hadamard} is known to hold: it follows from the fact that if $s < \min\{k-1,n-2\}$ is not an integer then there is a positive semidefinite matrix with bandwidth $k$ (and thus factor width $k$, by Theorem~\ref{thm:factor_width_and_bandwidth}) for which $A^{\odot s}$ is not positive semidefinite \cite[Theorem~1.4]{chordalpositivity}.
\end{itemize}

\noindent We now prove one additional case of Conjecture~\ref{conj:main_hadamard}: the $k = 2$ case. We first need a lemma.

\begin{lemma}\label{lem:DAD_s_entries}
    Let $A\in M_n^{+}$ be an entrywise non-negative matrix, $D$ be an invertible diagonal matrix, and $s>0$ be a real number. Then $(DAD)^{\odot s} = D^s A^{\odot s}D^s$.
\end{lemma}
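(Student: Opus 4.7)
The plan is to reduce the claim to a pointwise identity using the definitions of matrix multiplication with a diagonal matrix, Hadamard power, and diagonal power. Since both sides are matrices of the same shape, it suffices to show that their $(i,j)$-entries agree for all $i,j$.

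First, I would write $D = \mathrm{diag}(d_1,\ldots,d_n)$ with $d_i > 0$ (the natural reading here, since $s$ is an arbitrary positive real and we want $D^s$ to be well defined by $D^s = \mathrm{diag}(d_1^s,\ldots,d_n^s)$). Compute directly
\[
    [DAD]_{i,j} = d_i\tinyspace a_{i,j}\tinyspace d_j,
\]
using the fact that multiplying by a diagonal matrix on the left scales rows and on the right scales columns. Then by the definition of the Hadamard $s$-th power,
\[
    \big[(DAD)^{\odot s}\big]_{i,j} = (d_i\tinyspace a_{i,j}\tinyspace d_j)^s.
\]
Since $d_i, d_j \geq 0$ and $a_{i,j} \geq 0$ (the hypothesis that $A$ is entrywise non-negative is what lets us take real Hadamard powers at all), the multiplicativity of the $s$-th power on $[0,\infty)$ gives $(d_i a_{i,j} d_j)^s = d_i^s\tinyspace a_{i,j}^s\tinyspace d_j^s$. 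Finally, this is precisely the $(i,j)$-entry of $D^s A^{\odot s} D^s$, again by the row/column scaling description of diagonal multiplication.

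This proof is essentially bookkeeping, so there is no real obstacle; the only subtlety to flag is the need for $D$ to have non-negative (in practice, positive, since $D$ is invertible) diagonal entries so that $D^s$ is unambiguous and so that multiplicativity $(xy)^s = x^s y^s$ is legitimate. If the authors intend $D$ to potentially have negative diagonal entries, the statement still holds by interpreting $D^s$ entrywise on the diagonal and using evenness/oddness bookkeeping, but the cleanest presentation is to assume $d_i > 0$, which costs no generality in all applications of the lemma in the paper (we can always absorb signs into $A$ without changing $|a_{i,j}|$).
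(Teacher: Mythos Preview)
Your proof is correct and follows essentially the same entrywise computation as the paper's proof, which simply observes that $[(DAD)^{\odot s}]_{i,j} = a_{i,j}^s d_i^s d_j^s = [D^s A^{\odot s}D^s]_{i,j}$. Your added remark about needing the diagonal entries of $D$ to be positive so that $D^s$ and the identity $(xy)^s = x^s y^s$ are unambiguous is a fair clarification, and indeed in every application of the lemma in the paper $D$ is taken with positive diagonal entries.
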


\begin{proof}
   Just notice that, for all $i$ and $j$, we have $[(DAD)^{\odot s}]_{i,j} = a_{i,j}^s d_i^s d_j^s = [D^s A^{\odot s}D^s]_{i,j}$.
\end{proof}

\begin{theorem}\label{thm:factor_width_two_had_powers}
    Let $s > 0$ and suppose $A \in \M_n^{+}$ ($n \geq 3$) is entrywise non-negative with factor width at most $2$. If $s \geq 1$ then $A^{\odot s}$ also has factor width at most $2$. Conversely, if $s < 1$ then there exists a factor-width-$2$ matrix $A$ for which $A^{\odot s}$ is not positive semidefinite.
\end{theorem}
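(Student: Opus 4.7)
The plan is to handle the forward direction by reducing to the diagonally dominant case via Lemma~\ref{lem:SDD} and then showing that diagonal dominance is preserved under entrywise powers $s \geq 1$ for non-negative matrices, and to handle the converse via a single $3\times3$ tridiagonal counterexample.

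For the $s \geq 1$ direction, I would first invoke Lemma~\ref{lem:SDD} to obtain an invertible positive diagonal matrix $D$ with $B := DAD$ diagonally dominant. Since $A$ and $D$ are entrywise non-negative, so is $B$, so diagonal dominance reads $b_{ii} \geq \sum_{j \neq i} b_{ij}$, which in particular forces $0 \leq b_{ij} \leq b_{ii}$ for every $j \neq i$ (rows with $b_{ii} = 0$ are entirely zero and may be deleted per Footnote~\ref{foot:zero_in_diag}). For $s \geq 1$, the elementary inequality $x^s \leq x$ on $[0,1]$ applied to $x = b_{ij}/b_{ii}$ yields $b_{ij}^s \leq b_{ii}^{s-1} b_{ij}$, so summing gives
\[
    \sum_{j \neq i} b_{ij}^s \;\leq\; b_{ii}^{s-1} \sum_{j \neq i} b_{ij} \;\leq\; b_{ii}^{s}.
\]
Hence $B^{\odot s}$ is symmetric, diagonally dominant, and has non-negative diagonal, so it is PSD and by Lemma~\ref{lem:diag_dominance} has factor width at most $2$. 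Finally, Lemma~\ref{lem:DAD_s_entries} gives $B^{\odot s} = D^s A^{\odot s} D^s$, so applying Lemma~\ref{lem:DAD} with the invertible diagonal matrix $D^{-s}$ transfers the factor-width-$2$ conclusion from $B^{\odot s}$ back to $A^{\odot s}$.

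For the converse, I would exhibit the single matrix
\[
    A \;=\; \begin{pmatrix} 1 & 1 & 0 \\ 1 & 2 & 1 \\ 0 & 1 & 1 \end{pmatrix},
\]
which is diagonally dominant (in fact with equality on every row) and entrywise non-negative, and thus has factor width at most $2$ by Lemma~\ref{lem:diag_dominance}; one can also see this directly from the decomposition $A = (1,1,0)^T(1,1,0) + (0,1,1)^T(0,1,1)$. A cofactor expansion gives $\det(A^{\odot s}) = 2^s - 2$, which is strictly negative for every $s < 1$, so $A^{\odot s}$ is not positive semidefinite. For $n > 3$ one simply takes the block-diagonal embedding $A \oplus I_{n-3}$, which still has factor width at most $2$ and whose $\odot s$-power inherits the non-PSD $3 \times 3$ block.

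I expect no serious obstacles: the forward direction is essentially the observation that when $x_j/x_0 \in [0,1]$, raising to the power $s \geq 1$ can only shrink the ratios, which is exactly what diagonal dominance needs; the only minor care required is the reduction via Lemma~\ref{lem:SDD} so that one is not forced to work with general factor-width-$2$ decompositions but with the cleaner diagonally dominant representative. The sharpness at $s = 1$ is witnessed cleanly by the degenerate matrix above, whose determinant is already zero so that any decrease in the middle diagonal entry immediately breaks positivity.
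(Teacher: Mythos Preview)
Your proof is correct and follows essentially the same route as the paper: reduce to the diagonally dominant representative via Lemma~\ref{lem:SDD}, show diagonal dominance is preserved under $\odot s$ for $s\ge 1$, and transfer back via Lemmas~\ref{lem:DAD_s_entries} and~\ref{lem:DAD}. The only differences are cosmetic: the paper phrases the key inequality as $\|\cdot\|_1 \ge \|\cdot\|_s$ rather than your pointwise $x^s \le x$ on $[0,1]$, and for the converse the paper simply cites the literature, whereas your explicit tridiagonal example is self-contained and arguably cleaner.
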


\begin{proof}
    As noted earlier, the ``conversely'' statement is already known, so we just prove if $s \geq 1$ and $A \in \M_n^{+}$ ($n \geq 3$) is entrywise non-negative with factor width at most $2$ then so is $A^{\odot s}$. To this end, recall from Lemma~\ref{lem:SDD} that there exists an invertible diagonal matrix $D \in \M_n^+$ for which $B := DAD$ is diagonally dominant. Then for all $1 \leq j \leq n$ we have
    \begin{align*}
        b_{j,j}^s & \geq \left(\sum_{i \neq j} |b_{i,j}|\right)^s \geq \sum_{i \neq j} |b_{i,j}|^s,
    \end{align*}
    where the second inequality comes from the fact that the vector $1$-norm is at least as large as the vector $s$-norm when $s \geq 1$ (i.e., $\|(|b_{1,j}|,\ldots,|b_{n,j}|)\|_1 \geq \|(|b_{1,j}|,\ldots,|b_{n,j}|)\|_s$). It follows that $B^{\odot s} = (DAD)^{\odot s} = D^s A^{\odot s}D^s$ is also diagonally dominant, where the second equality follows from Lemma~\ref{lem:DAD_s_entries}. Applying Lemma~\ref{lem:SDD} again now reveals that $A^{\odot s} = D^{-s}B^{\odot s}D^{-s}$ has factor width at most $2$, completing the proof.
\end{proof}

\subsection{Large Hadamard Powers}

The earlier results in this section (Theorem~\ref{thm:hadamard_same_matrix} in particular) demonstrate that factor width does not increase when taking Hadamard powers. In this final subsection, we show that in fact the factor width typically decreases drastically when taking Hadamard powers. In particular, we show that for all positive semidefinite matrices outside of a set of measure $0$, there is a sufficiently large Hadamard power that has factor width $2$:

\begin{theorem}\label{thm:goes_to_fac_wid_2}
    Let $A\in M_n^+$ be an entrywise non-negative PSD matrix. If every principal $2\times 2$ submatrix of $A$ is positive definite, then there exists a natural number $M$ such that $A^{\odot m}$ has factor width at most~$2$ whenever $m \geq M$ is a natural number.
\end{theorem}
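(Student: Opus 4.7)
The plan is to diagonally normalize $A$, show that the resulting matrix $B$ has off-diagonal entries strictly less than $1$ in absolute value, deduce that $B^{\odot m}$ is diagonally dominant for all sufficiently large integers $m$, and finally transfer that conclusion back to $A^{\odot m}$ via the lemmas of Section~\ref{sec:prelims} and Section~\ref{sec:hadamard_product}.

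First I would observe that the hypothesis forces every diagonal entry $a_{j,j}$ to be strictly positive: if $a_{j,j} = 0$, then the $2 \times 2$ principal submatrix on rows/columns $\{j,k\}$ would have determinant $-a_{j,k}^2 \leq 0$, contradicting positive definiteness. Hence the diagonal matrix $D$ with $d_j := a_{j,j}^{-1/2}$ is well defined and invertible. Setting $B := DAD$, Lemma~\ref{lem:DAD} ensures $B$ has the same factor width as $A$, and by direct computation $b_{j,j} = 1$ for every $j$ while $b_{i,j} = a_{i,j}/\sqrt{a_{i,i}a_{j,j}}$ for $i \neq j$. The assumption that each $2 \times 2$ principal of $A$ is positive definite translates exactly to $a_{i,i}a_{j,j} - a_{i,j}^2 > 0$, which (combined with $a_{i,j} \geq 0$) yields $0 \leq b_{i,j} < 1$ for all $i \neq j$.

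Next, put $\beta := \max_{i \neq j} b_{i,j}$, so $\beta < 1$. Because $\beta^m \to 0$, I can pick an integer $M$ with $(n-1)\beta^M \leq 1$. Then for every integer $m \geq M$ and every index $j$,
$$\sum_{i \neq j} b_{i,j}^m \;\leq\; (n-1)\beta^m \;\leq\; (n-1)\beta^M \;\leq\; 1 \;=\; b_{j,j}^m,$$
so $B^{\odot m}$ is (weakly) diagonally dominant. Since $B$ is positive semidefinite, the Schur product theorem makes $B^{\odot m}$ positive semidefinite as well, and Lemma~\ref{lem:diag_dominance} then gives that $B^{\odot m}$ has factor width at most $2$.

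To finish, I would transfer the conclusion from $B^{\odot m}$ to $A^{\odot m}$. Lemma~\ref{lem:DAD_s_entries} gives $B^{\odot m} = (DAD)^{\odot m} = D^m A^{\odot m} D^m$, and since $D^m$ is an invertible diagonal matrix, Lemma~\ref{lem:DAD} implies $A^{\odot m}$ has the same factor width as $B^{\odot m}$, namely at most $2$. I do not anticipate any serious obstacle in carrying this out; the only substantive point is recognizing that the principal $2 \times 2$ hypothesis is precisely what is needed to guarantee the strict inequalities $b_{i,j} < 1$, which is what allows the off-diagonal entries of $B^{\odot m}$ to be driven uniformly below the diagonal entries.
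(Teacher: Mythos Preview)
Your proof is correct and follows the same overall scaffolding as the paper's: diagonally normalize $A$ to $B = DAD$ with unit diagonal, observe that the $2\times 2$ hypothesis forces $|b_{i,j}|<1$ off the diagonal, show that $B^{\odot m}$ has factor width at most $2$ for large $m$, and transfer back via Lemmas~\ref{lem:DAD} and~\ref{lem:DAD_s_entries}.

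The genuine difference lies in how the central step---that $B^{\odot m}$ has factor width at most $2$---is established. The paper argues that $B^{\odot m}\to I$ entrywise, hence $\lambda_{\max}(B^{\odot m})\to 1$, and then invokes an external eigenvalue criterion \cite[Theorem~5]{johnston2022absolutely} to conclude factor width at most $2$. You instead bound $\sum_{i\neq j}b_{i,j}^m \leq (n-1)\beta^m \leq 1$ directly and apply Lemma~\ref{lem:diag_dominance}. Your route is more elementary and fully self-contained within the paper's own lemmas, and it yields an explicit threshold $M$ (any $M$ with $(n-1)\beta^M \leq 1$), whereas the eigenvalue argument is non-constructive as stated. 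The paper's route, on the other hand, shows off the connection to the spectral characterization of factor width $2$, which may be of independent interest to readers familiar with that literature.
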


\begin{proof}
    Assume without loss of generality that the diagonal entries of $A$ are all non-zero and then pick an invertible diagonal matrix $D$ for which the matrix $B := DAD$ has diagonal entries equal to $1$. By Lemma~\ref{lem:DAD} we know that $A$ and $B$ have the same factor width. Furthermore, since each $2 \times 2$ principal submatrix of $A$ is positive definite, the same is true of $B$, so we have $|b_{i,j}| < 1$ whenever $i \neq j$.
    
    It follows that $\lim_{m\rightarrow \infty}B^{\odot m} = I$, so $\lim_{m\rightarrow \infty}\lambda_{\mathrm{max}}(B^{\odot m}) = 1$. It follows from \cite[Theorem~5]{johnston2022absolutely} that $B^{\odot m}$ has factor width at most~$2$ whenever $m$ is sufficiently large.\footnote{To use that theorem, first rescale $B^{\odot m}$ so that $\tr(B^{\odot m}) = 1$ and then notice that $\lim_{m\rightarrow \infty}\lambda_{\mathrm{max}}(B^{\odot m}) = 1/n$, so $\lambda_{\mathrm{max}}(B^{\odot m}) \leq 1/(n-1)$ when $m$ is sufficiently large.} By Lemma~\ref{lem:DAD_s_entries} we know that $B^{\odot m} = D^m A^{\odot m}D^m$, so $A^{\odot m}$ has the same factor width as $B^{\odot m}$, completing the proof.
\end{proof}

The above theorem is optimal in at least two senses. First, the cone of matrices with factor width at most $2$ is the smallest factor width cone that we could possibly hope large Hadamard powers to land inside of, since the cone of matrices with factor width $1$ consists of just the diagonal PSD matrices. Second, the hypothesis that every $2 \times 2$ principal submatrix is positive definite is required, as demonstrated by the all-ones matrix $J \in \M_n^{+}$ which has the property that $J^{\odot m}$ has factor width $n$ for all $m$.

\section*{Acknowledgements}
    The authors thank Rob~Pratt for introducing them to covering designs~\cite{MO461882}, and the anonymous referee for their insightful comments leading to Theorem~\ref{thm:chordal_main}. N.J.\ was supported by NSERC Discovery Grant number RGPIN-2022-04098. S.P.\ was supported by NSERC Discovery Grant number RGPIN-2019-05276, the Canada Research Chairs Program grant number 101062, and the  Canada Foundation for Innovation grant number 43948. 

\bibliographystyle{alpha}
\bibliography{ref}

\end{document}